\documentclass[review]{elsarticle}

\usepackage{lineno,hyperref}
\usepackage{amssymb,amsmath,amsthm, amsfonts, mathrsfs, bbm, dsfont}
\usepackage[utf8]{inputenc}
\newtheorem{theorem}{Theorem}[section]
\newtheorem{lemma}{Lemma}[section]

\newtheorem{corollary}{Corollary}[section]

\newtheorem{definition}{Definition}[section]

\newtheorem{remark}{Remark}

\usepackage{extarrows}
\modulolinenumbers[5]

%\journal{Journal of \LaTeX\ Templates}

%%%%%%%%%%%%%%%%%%%%%%%
%% Elsevier bibliography styles
%%%%%%%%%%%%%%%%%%%%%%%
%% To change the style, put a % in front of the second line of the current style and
%% remove the % from the second line of the style you would like to use.
%%%%%%%%%%%%%%%%%%%%%%%

%% Numbered
%\bibliographystyle{model1-num-names}

%% Numbered without titles
%\bibliographystyle{model1a-num-names}

%% Harvard
%\bibliographystyle{model2-names.bst}\biboptions{authoryear}

%% Vancouver numbered
%\usepackage{numcompress}\bibliographystyle{model3-num-names}

%% Vancouver name/year
%\usepackage{numcompress}\bibliographystyle{model4-names}\biboptions{authoryear}

%% APA style
%\bibliographystyle{model5-names}\biboptions{authoryear}

%% AMA style
%\usepackage{numcompress}\bibliographystyle{model6-num-names}

%% `Elsevier LaTeX' style
\bibliographystyle{elsarticle-num}
%%%%%%%%%%%%%%%%%%%%%%%

\begin{document}

\begin{frontmatter}

\title{ Closed subspaces and some basic topological properties of noncommutative Orlicz spaces\tnoteref{mytitlenote}}
\tnotetext[mytitlenote]{The research is supported by National Science Foundation of China (Grant No. 11371222 ) and by the scientific research foundation of Education Bureau
of Hebei Province and by Graduate student innovation program of Beijing Institute of Technology (Grant No. 2015CX10037 ).}

%% Group authors per affiliation:
%\author{Elsevier\fnref{myfootnote}}
%\address{Radarweg 29, Amsterdam}
%\fntext[myfootnote]{Since 1880.}

%% or include affiliations in footnotes:
\author[mymainaddress]{ Lining Jiang}
\ead{jianglining@bit.edu.cn}

\author[mymainaddress,mysecondaryaddress]{Zhenhua Ma\corref{Zhenhua Ma } }
\cortext[Zhenhua Ma ]{Corresponding author}
\ead{mazhenghua\_1981@163.com}

\address[mymainaddress]{School of Mathematics and Statistics, Beijing Institute of Technology, Beijing, 100081, P. R. China}
\address[mysecondaryaddress]{Department of Mathematics and Physics, Hebei University of Architecture, Zhangjiakou, 075024, P. R. China}

\begin{abstract}
In this paper, we study the noncommutative Orlicz space $L_{\varphi}(\widetilde{\mathcal{M}},\tau)$, which generalizes the concept of noncommutative $L^{p}$ space, where $\mathcal{M}$ is a von Neumann algebra, and $\varphi$ is an Orlicz function. As a modular space, the space $L_{\varphi}(\widetilde{\mathcal{M}},\tau)$ possesses the Fatou property, and consequently, it is a Banach space. In addition, a new description of the subspace $E_{\varphi}(\widetilde{\mathcal{M}},\tau)=\overline{\mathcal{M}\bigcap L_{\varphi}(\widetilde{\mathcal{M}},\tau)}$ in $L_{\varphi}(\widetilde{\mathcal{M}},\tau)$, which is closed under the norm topology and dense under the measure topology, is given. Moreover, if the Orlicz function $\varphi$ satisfies the $\Delta_{2}$-condition, then $L_{\varphi}(\widetilde{\mathcal{M}},\tau)$ is uniformly monotone, and the convergence in the norm topology and measure topology coincide on the unit sphere. Hence, $E_{\varphi}(\widetilde{\mathcal{M}},\tau)=L_{\varphi}(\widetilde{\mathcal{M}},\tau)$ if $\varphi$ satisfies the $\Delta_{2}$-condition.
\end{abstract}

\begin{keyword}
Noncommutative Orlicz spaces\sep $\tau$-measurable operator\sep von Neumann algebra\sep Orlicz function
\MSC[2010] 46B20\sep 46B25
\end{keyword}

\end{frontmatter}

\linenumbers

\section{Preliminaries}
Noncommutative integration theory was first introduced by I.E. Segal \cite{Segal} and is a fundamental tool in many theories, such as operator theory and noncommutative probability theory. Since the noncommutative space $L^{p}(\mathcal{M},\tau)$ \,\,$(1\leq p\leq \infty)$ , where $\tau$ is a faithful semifinite normal trace on a von Neumann algebra $\mathcal{M}$, has been defined \cite{Yeadon,Segal}, many scholars have conducted a systematic study of these spaces, and obtained many interseting results \cite{Xu1,Xu2,Haagerup}. As a natural extension of the space, the theory of noncommutative Orlicz space associated to a trace was introduced and studied by many mathematicians. For details, one can see \cite{Muratov}, \cite{Rashed}, \cite{Ghadir}, \cite{Kunze}, and so on. In this paper we will take Sadeghi's approach \cite{Ghadir} and study the topological properties of noncommutative Orlicz spaces.

To begin with, we collect some definitions and facts related to von Neumann algebras.
Suppose that $\mathcal{M}$ is a semi-finite von Neumann algebra acting on a Hilbert space $\mathcal{H}$ with a normal semi-finite faithful trace $\tau.$ The identity in $\mathcal{M}$ is denoted by $\mathbf{1}$ and the set of all self-adjoint projections on $\mathcal{M}$ is denoted by $\mathcal{P}(\mathcal{M})$.

\begin{definition}A densely-defined closed linear operator $x: \mathcal{D}(x)\rightarrow \mathcal{H}$ with domain $\mathcal{D}(x)\subseteq\mathcal{H}$ is called affiliated with $\mathcal{M}$ if and only if $u^{\ast}xu=x$ for all unitary operators $u$ belonging to the commutant $\mathcal{M^{\prime}}$ of $\mathcal{M}$.
\end{definition}

\begin{definition}\cite{Nelson}
Suppose that $x$ affiliated with $\mathcal{M}$. We call $x$ is $\tau$-measurable, if there exists a number $\lambda\geq0$ such that $$\tau(e_{(\lambda,\infty)}(|x|))<\infty,$$
 where $e_{(\lambda,\infty)}(|x|)$ is the spectral projection of $|x|$ corresponding to the interval $(\lambda,\infty)$. The collection of all $\tau$-measurable operators is denoted by $\widetilde{\mathcal{M}}$.
\end{definition}

Given $0<\varepsilon,\delta\in \mathds{R}$, set
\begin{eqnarray*}
 \mathcal{V}(\varepsilon,\delta)=\{x\in\widetilde{\mathcal{M}}: \,\, there \,\, exists \,\, e\in\mathcal{P}(\mathcal{M})
 \,\,such\,\,that\,\, \\e(\mathcal{H})\in\mathcal{D}(x), \|xe\|_{\mathcal{B(\mathcal{H})}}\leq\varepsilon \,\,and \,\, \tau(\mathbf{1}-e)\leq\delta\}.
\end{eqnarray*}
Here, $\varepsilon,\delta$ run over all strictly positive numbers \cite{Nelson}. An alternative description of the set is given by
$$\mathcal{V}(\varepsilon,\delta)=\{x\in\widetilde{\mathcal{M}}: \tau(e_{(\varepsilon,\infty)}(|x|))<\delta\}.$$

\begin{definition}(\cite{Nelson}) Suppose that $x_n,\,x\in \widetilde{\mathcal{M}}$. We say that $x_{n}$ converges to $x$ in measure ($x_{n}\xrightarrow{\tau_{m}}x$ for short ), if for all $\varepsilon,\delta>0$, there exists an $n_{0}$ such that $x_{n}-x\in\mathcal{V}(\varepsilon,\delta)$, $n\geq n_{0}$.
\end{definition}
 \begin{remark}
 1) Using the definition of $\mathcal{V}(\varepsilon,\delta)$, one can get that $x_{n}\xrightarrow{\tau_{m}}x$ if and only if $$\lim_{n\rightarrow\infty}\tau(e_{(\varepsilon,\infty)}(|x_{n}-x|))=0$$
 for any $\varepsilon>0.$

 2) It is known that the collection $\{\mathcal{V}(\varepsilon,\delta)\}_{\varepsilon,\delta>0}$ is a neighborhood base at 0 for a vector space topology $\tau_{m}$ on $\widetilde{\mathcal{M}}$ and that $\widetilde{\mathcal{M}}$ is a complete topological $\ast$-algebra.
 \end{remark}

In the setting of $\tau$-measurable operators, the generalized singular value functions are the analogue (and actually, generalization) of the decreasing rearrangements of functions in the classical settings, and is more importantly the cornerstone for the theory of noncommutative rearrangement invariant Banach function spaces \cite{Dodds1}.

\begin{definition}(\cite{Fack})
For $x\in\widetilde{\mathcal{M}}$, the distribution function $\lambda_{(\cdot)}(x): [0,\infty)\rightarrow[0,\infty]$ is defined by
$$\lambda_{s}(x)=\tau(e_{(s,\infty)}(|x|)),\,\,s\geq0.$$
\end{definition}

Since the operator $x$ is $\tau$-measurable, $\lambda_{s}(x)<\infty$ for $s$ large enough and $\lim_{s\rightarrow\infty}\lambda_{s}(x)=0$ as noted before. Furthermore, the function $\lambda_{s}(x)$ is decreasing and right-continuous since $\tau$ is normal and $e_{(s_{n},\infty)}(|x|)\uparrow e_{(s,\infty)}(|x|)$ strongly as $s_{n}\downarrow s.$

\begin{definition}(\cite{Fack})
Let $L^{0}(X,\Sigma, m)$ be the space of measurable functions on some $\sigma$-finite measure space $(X,\Sigma, m)$. Give an element $x\in\widetilde{\mathcal{M}}$, the generalized singular value function $\mu_{(\cdot)}(x): [0,\infty]\rightarrow[0,\infty]$ is defined by
$$\mu_{t}(x)=\inf\{s\geq0: \lambda_{s}(x)\leq t \}, \,\, t>0,$$
where $\lambda_{s}(x)$ is the distribution function.
\end{definition}
It is known that the infimum can be attained and that $ \lambda_{\mu_{t}(x)}(x)\leq t,\,\,t>0$. For details on the generalised singular value see \cite{Fack}. We proceed to briefly review the concept of a Banach function space of measurable functions on $(0,\infty)$ (see \cite{Dodds}.) A function norm $\rho$ on $L^{0}(0,\infty)$ is defined to be a mapping $\rho: L^{0}_{+}\rightarrow [0,\infty]$ satisfying

1. $\rho(f)=0$ iff $f=0$ a.e.

2. $\rho(\lambda f)=\lambda\rho(f)$ for all $f\in L^{0}_{+}, \lambda>0$.

3. $\rho(x+y)\leq \rho(x)+\rho(y)$ for all.

4. $f\leq g$ implies $\rho(f)\leq\rho(g)$ for all $f,g \in L^{0}_{+}$.

Such a $\rho$ may be extended to all of $L^{0}$ by setting $\rho(f)=\rho(|f|)$, in which case we may define $L^{\rho}(0,\infty)=\{f\in L^{0}(0,\infty): \rho(f)<\infty\}$. It now $L^{\rho}(0,\infty)$ turns out to be a Banach space when equipped with the norm $\rho(\cdot)$, we refer to it as a Banach function space.

Using the above context Dodds, Dodds and de Pagter \cite{Dodds} formally defined the noncommutative space $L^{\rho}(\widetilde{M})$ to be
$$L^{\rho}(\widetilde{M})=\{f\in \widetilde{M}: \mu(f)\in L^{\rho}(0,\infty)\}$$
and showed that if $\rho$ is lower semicontinuous and $L^{\rho}(0,\infty)$ rearrangement-invariant, $L^{\rho}(\widetilde{M})$ is a Banach space when equipped with the norm $\|f\|_{\rho}=\rho(\mu(f))$.

\begin{remark}
If $\mathcal{M}$ is a commutative von Neumann algebra, then $\mathcal{M}$ can be identified with $L^{\infty}(X,\mu)$ and $\tau(f)=\int_{X} f \mathrm{d}\mu$, where $(X,\mu)$ is a localizable measure space, and where the distribution function and the generalized singular value function defined above are exactly the usual distribution function and classical rearrangement \cite{Stein}.
\end{remark}

Next we recall the definition and  some basic properties of noncommutative Orlicz spaces.

\begin{definition}(\cite{Chen})
The function $\varphi: [0,\infty)\rightarrow[0,\infty]$ is called an Orlicz function if
$$\varphi(u)=\int^{|u|}_{0}p(t) \mathrm{d}t, $$
where the real-valued function $p$ defined on $[0,\infty)$ has the following properties:

$(1)$ $p(0)=0, p(t)>0$ for $t>0$ and $\lim_{t\rightarrow\infty}p(t)=\infty;$

$(2)$ $p$ is right continuous;

$(3)$ $p$ is nondecreasing on $(0,\infty).$
\end{definition}

For every Orlicz function $\varphi$, there is a complementary Orlicz function $\psi: [0,\infty)\rightarrow[0,\infty]$ defined by
$$\psi(u)=\sup\{uv-\varphi(v): v\geq0\}.$$

A pair of complementary Orlicz functions $(\varphi,\psi)$ fulfils the following Young Inequality:
$$uv\leq\varphi(u)+\psi(v),\,\, u,v\in[0,\infty),$$
and equality holds if and only if $u=\psi(v)$ or $v=\varphi(u)$.
 For background on Orlicz functions and Orlicz spaces one can see \cite{Rao,Chen}.

Suppose that $\varphi$ is an Orlicz function. For $x\in\widetilde{\mathcal{M}}$, set
$$\widetilde{\rho}_{\varphi}(x)=\tau(\varphi(|x|)),$$
then $\tau(\varphi(|x|))$ is a convex modular on $\widetilde{\mathcal{M}}$ \cite{Ghadir}.

\begin{definition}Set
$$L_{\varphi}(\widetilde{\mathcal{M}},\tau)=\left\{x\in\widetilde{\mathcal{M}}:  \tau\left(\varphi(\lambda|x|)\right)<\infty \,\, for\,\,some\,\,\lambda>0\right\},$$
and equip the space with the Luxemburg norm
$$\|x\|=\inf\left\{\lambda>0: \tau\left(\varphi\left(\frac{|x|}{\lambda}\right)\right)\leq1\right\}.$$
Such a space is called a noncommutative Orlicz space.
\end{definition}

\begin{remark}
Notice that if $\varphi(x)=|x|^{p},\,\,1\leq p<\infty$ for any $\tau$-measurable operator $x\in{\widetilde{\mathcal{M}}}$, then $L_{\varphi}(\widetilde{\mathcal{M}},\tau)$ is nothing but the noncommutative $L^p$ space $L^{p}(\widetilde{\mathcal{M}},\tau)$ and the Luxemburg norm generated by this function is expressed by the formula
$$\|x\|_{p}=\left(\tau\left(|x|^{p}\right)\right)^{\frac{1}{p}}.$$
\end{remark}

Similar to the commutative case, for $x,y\in L_{\varphi}(\widetilde{\mathcal{M}},\tau)$ one can define the following Orlicz norm:
$$\|x\|^{o}=\sup\{\tau(|xy|): \tau(\psi(|y|))\leq1\},$$
where $\psi$ is the complementary function of $\varphi.$
Moreover, we have the following relation between the two norms \cite{Ghadir},
$$\|x\|\leq\|x\|^{o}\leq2\|x\|.$$

We also can get the Young Inequality in noncommutative Orlicz spaces:
\begin{lemma}(\cite{Kunze})
For a pair $(\varphi,\psi)$ of complementary Orlicz functions we have:
$$\tau(|xy|)\leq\tau(\varphi(|x|))+\tau(\psi(|y|)),\,\, for\,\, all\,\, x,y\in \widetilde{\mathcal{M}}.$$
Moreover, if $0\leq x\in\widetilde{\mathcal{M}}$ with $\tau(\varphi(x))<\infty$, then there is a $0\leq y\in\widetilde{\mathcal{M}}$ with
$$\tau(xy)=\tau(\varphi(x))+\tau(\psi(y))\,\, and\,\,\tau(\psi(y))\leq1.$$
\end{lemma}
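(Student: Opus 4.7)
For the Young-type trace inequality, I would reduce to the scalar Young inequality via the generalized singular value functions, relying on two standard tools: the trace identity $\tau(f(|a|)) = \int_0^\infty f(\mu_t(a))\,dt$ (valid for Borel $f \geq 0$ with $f(0)=0$), and the noncommutative Hardy--Littlewood estimate $\tau(|xy|) \leq \int_0^\infty \mu_t(x)\mu_t(y)\,dt$. The latter is obtained by writing the polar decomposition $xy = u|xy|$ and applying the Fack--Kosaki bound $|\tau(ab)| \leq \int_0^\infty \mu_t(a)\mu_t(b)\,dt$ to $a=u^{*}x$, $b=y$, using that partial isometries do not increase singular values. Once the left-hand side is so estimated, applying the scalar Young inequality $\mu_t(x)\mu_t(y) \leq \varphi(\mu_t(x)) + \psi(\mu_t(y))$ pointwise in $t$, integrating, and re-applying the trace identity to the right-hand side yields $\tau(|xy|) \leq \tau(\varphi(|x|)) + \tau(\psi(|y|))$. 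Both sides are permitted to equal $+\infty$, so no separate integrability condition is needed.

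For the equality clause, the natural candidate is $y = p(x)$, built by Borel functional calculus with $p$ the nondecreasing right-continuous density appearing in the Orlicz-function definition. Positivity and $\tau$-measurability of $y$ are immediate: $p \geq 0$ with $p(0)=0$ gives $y \geq 0$, while $p(t)\to\infty$ forces $e_{(\lambda,\infty)}(p(x)) \leq e_{(p^{-1}(\lambda),\infty)}(x)$, which has finite $\tau$-value for large $\lambda$ by $\tau$-measurability of $x$. The scalar equality case of Young's inequality, $tp(t) = \varphi(t) + \psi(p(t))$, then lifts through functional calculus to the operator identity $xp(x) = \varphi(x) + \psi(p(x))$, since everything mutually commutes as it arises from the functional calculus of the single operator $x$. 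Taking $\tau$ of both sides produces the required $\tau(xy) = \tau(\varphi(x)) + \tau(\psi(y))$.

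The principal obstacle is simultaneously securing $\tau(\psi(y)) \leq 1$, since $\tau(\psi(p(x)))$ is fully determined by $x$ and may in general exceed $1$. I anticipate the remedy is a spectral truncation: replace $y = p(x)$ by $y_r = p(x)\,e_{[0,r]}(x)$, and choose $r > 0$ so that $\tau(\psi(y_r)) \leq 1$, possible by the monotonicity of $\tau\circ\psi$ along the increasing family of spectral projections and normality of $\tau$. The equality $\tau(xy_r) = \tau(\varphi(x)) + \tau(\psi(y_r))$ does not transfer verbatim from the untruncated case, so preserving it---perhaps by exploiting the ambiguity interval of the subdifferential $\partial\varphi$, or by an additional rescaling absorbed in the choice of $r$---is the delicate point of the argument and the main technical obstacle I foresee.
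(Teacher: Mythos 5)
A preliminary remark: the paper contains no proof of this lemma at all --- it is quoted as a known result from \cite{Kunze} --- so your proposal can only be measured against the standard argument behind the citation, not against an internal proof.

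Your first paragraph is correct and is essentially that standard argument. Both ingredients are in \cite{Fack}: the trace identity $\tau(f(|a|))=\int_0^\infty f(\mu_t(a))\,\mathrm{d}t$ (Corollary 2.8, stated for continuous increasing $f$ with $f(0)=0$, which covers Orlicz functions) and the submajorization $\int_0^t \mu_s(xy)\,\mathrm{d}s\le\int_0^t\mu_s(x)\mu_s(y)\,\mathrm{d}s$ (Theorem 4.2), from which $\tau(|xy|)=\int_0^\infty\mu_s(xy)\,\mathrm{d}s\le\int_0^\infty\mu_s(x)\mu_s(y)\,\mathrm{d}s$ follows by letting $t\to\infty$; your polar-decomposition detour through $|\tau(ab)|\le\int_0^\infty\mu_t(a)\mu_t(b)\,\mathrm{d}t$ is valid but not needed. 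Pointwise scalar Young plus integration then finishes, with both sides allowed in $[0,\infty]$, exactly as you say.

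The second clause is where the proposal fails, and the obstacle you flag as ``the delicate point'' is in fact fatal, not merely technical. Chain the inequalities you already have: if $\tau(xy)=\tau(\varphi(x))+\tau(\psi(y))$ with $\tau(\varphi(x))<\infty$ and $\tau(\psi(y))\le1$, then equality holds throughout $\tau(xy)\le\int_0^\infty\mu_t(x)\mu_t(y)\,\mathrm{d}t\le\int_0^\infty\bigl[\varphi(\mu_t(x))+\psi(\mu_t(y))\bigr]\,\mathrm{d}t$, and since the total is finite the nonnegative integrand $\varphi(\mu_t(x))+\psi(\mu_t(y))-\mu_t(x)\mu_t(y)$ vanishes a.e.; that is, $\mu_t(y)$ must a.e.\ be a subdifferential selection for $\varphi$ at $\mu_t(x)$. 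Now take $\varphi(u)=\psi(u)=u^2/2$ (a legitimate Orlicz function here, with $p(t)=t$): the a.e.\ equality forces $\mu(y)=\mu(x)$, hence $\tau(\psi(y))=\tau(\varphi(x))$. So for any $x\ge0$ with, say, $\tau(\varphi(x))=10<\infty$, \emph{no} $y\ge0$ satisfies both the equality and $\tau(\psi(y))\le1$. Consequently your repair strategy cannot succeed: the truncation $y_r=p(x)e_{[0,r]}(x)$ destroys the pointwise equality on the spectral region $(r,\infty)$ where $\varphi(x)$ still charges the trace, and no rescaling can help because the equality by itself already pins $\tau(\psi(y))$ to a quantity determined by $x$. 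The statement as transcribed in the paper is thus missing a normalization hypothesis (it can only hold for suitably small $x$, e.g.\ with $\tau(\varphi(x))\le1$, and even then only after the truncation-in-$x$ argument is done carefully), so the right move is not to force the truncation of $y$ through, but to return to the original formulation in \cite{Kunze} and prove that instead.
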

For further information on the theory of noncommutative Orlicz spaces we refer the reader to \cite{Muratov,Rashed,Rashed1,Ghadir,Kunze}.

\section{Closed linear subspaces of $L_{\varphi}(\widetilde{\mathcal{M}},\tau)$}

In this section, we prove that the noncommutative Orlicz spaces $L_{\varphi}(\widetilde{\mathcal{M}},\tau)$ with the Luxemburg norm have the Fatou property. Consequently, the space is complete. In addition, we give a new description of the subspace $E_{\varphi}(\widetilde{\mathcal{M}},\tau)$ given in \cite{Ghadir}, and prove that this is a closed linear subspace in norm topology and a dense subspace in measure topology of the $L_{\varphi}(\widetilde{\mathcal{M}},\tau)$.

Firstly we  give the definition of rearrangement invariant as follows.

\begin{definition}A linear subspace $E$ of $\widetilde{\mathcal{M}}$ is called
rearrangement invariant if and only if $x\in E, \, y\in\widetilde{\mathcal{M}}$ and for all $t>0$, $\mu_{t}(y)\leq\mu_{t}(x)$ imply that $ y\in E$ and $\|y\|_{E}\leq\|x\|_{E}.$
\end{definition}

It is well known that $L_{\varphi}(\widetilde{\mathcal{M}},\tau)$ are normed rearrangement invariant operator spaces \cite{Bennet}. From Corollary 2.4 in \cite{Dodds} we know that a normed rearrangement invariant operator space with the Fatou property is a Banach space. Hence, in order to prove that $L_{\varphi}(\widetilde{\mathcal{M}},\tau)$ are  Banach spaces, it suffices to show that $L_{\varphi}(\widetilde{\mathcal{M}},\tau)$ satisfy the Fatou property.

\begin{theorem}
(Fatou property) Suppose that $x\in\widetilde{\mathcal{M}}, x_{n}\in L_{\varphi}(\widetilde{\mathcal{M}},\tau)$. If $\sup_{n}\|x_{n}\|<\infty$ and $0\leq x_{n}\uparrow_{n} x$, then $x\in L_{\varphi}(\widetilde{\mathcal{M}},\tau)$ and $\|x\|=\sup_{n}\|x_{n}\|.$
\end{theorem}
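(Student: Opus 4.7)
The plan is to sandwich $\|x\|$ between two bounds. Set $M := \sup_n \|x_n\|$, which is finite by hypothesis; since $(\|x_n\|)$ is nondecreasing (as observed below), $M = \lim_n \|x_n\|$.

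For the easy direction $M \le \|x\|$, I would first note that $0 \le x_n \le x_{n+1} \le x$ implies $\mu_t(x_n) \le \mu_t(x_{n+1}) \le \mu_t(x)$ for every $t>0$, by the order-preserving properties of the generalized singular value function. Because the Luxemburg norm on $L_\varphi(\widetilde{\mathcal{M}},\tau)$ is rearrangement invariant and monotone in $\mu_t$, this yields $\|x_n\| \le \|x_{n+1}\| \le \|x\|$, with $\|x\|$ allowed a priori to be $+\infty$. Hence $M \le \|x\|$.

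For the reverse inequality I would fix $\lambda > M$. For each $n$, since $\|x_n\| < \lambda$, the infimum defining the Luxemburg norm produces some $\mu \in (\|x_n\|,\lambda)$ with $\tau(\varphi(x_n/\mu)) \le 1$; then monotonicity of $\varphi$ upgrades this to $\tau(\varphi(x_n/\lambda)) \le 1$. The crucial step is to pass to the limit in $n$: since $x_n \uparrow x$ as positive elements of $\widetilde{\mathcal{M}}$ and $\varphi$ is continuous and nondecreasing on $[0,\infty)$, functional calculus yields $\varphi(x_n/\lambda) \uparrow \varphi(x/\lambda)$, and the normality of $\tau$ on $\widetilde{\mathcal{M}}_+$ gives
\[
\tau\bigl(\varphi(x/\lambda)\bigr) \;=\; \sup_n \tau\bigl(\varphi(x_n/\lambda)\bigr) \;\le\; 1.
\]
Therefore $\|x\| \le \lambda$, and letting $\lambda \downarrow M$ concludes $\|x\| \le M$. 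Combining the two inequalities yields $\|x\| = M < \infty$, so $x \in L_\varphi(\widetilde{\mathcal{M}},\tau)$.

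The main obstacle will be justifying the monotone convergence $\tau(\varphi(x_n/\lambda)) \uparrow \tau(\varphi(x/\lambda))$. Two ingredients need care: first, that the Borel functional calculus on positive $\tau$-measurable operators is compatible with monotone increasing sequences (so that $x_n \uparrow x$ implies $\varphi(x_n/\lambda) \uparrow \varphi(x/\lambda)$, using continuity and monotonicity of $\varphi$ together with the spectral calculus in $\widetilde{\mathcal{M}}$); and second, that the extension of the normal trace $\tau$ to $\widetilde{\mathcal{M}}_+$ preserves suprema along such increasing sequences. Both are standard in noncommutative integration theory, so the work lies in invoking them precisely.
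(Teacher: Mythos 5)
Your overall sandwich strategy is sound, and most of its steps are fine: the easy direction via $0\le x_n\le x \Rightarrow \mu_t(x_n)\le\mu_t(x)$ and rearrangement invariance, and the Luxemburg-norm manipulation producing $\tau(\varphi(x_n/\lambda))\le 1$ for each fixed $\lambda>M$ (the passage from $\mu$ to $\lambda$ is legitimate since $x_n/\lambda$ and $x_n/\mu$ commute). But the step you yourself flag as the main obstacle contains a genuine error: from $x_n\uparrow x$ in $\widetilde{\mathcal{M}}_+$ you cannot conclude by functional calculus that $\varphi(x_n/\lambda)\uparrow\varphi(x/\lambda)$ in the operator order. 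For noncommuting operators, $0\le a\le b$ implies $\varphi(a)\le\varphi(b)$ only when $\varphi$ is operator monotone, and Orlicz functions---convex, increasing---are essentially never operator monotone (already $\varphi(t)=t^2$ fails: $0\le a\le b$ does not imply $a^2\le b^2$). So the sequence $\varphi(x_n/\lambda)$ need not be increasing, need not be dominated by $\varphi(x/\lambda)$, and the appeal to normality of $\tau$ along increasing sequences has nothing to act on. This is not a ``standard fact'' whose invocation merely needs polish; as stated, the step fails.

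The repair is to push the monotone convergence down to the scalar level, which is exactly the device the paper uses. By Proposition 1.7 of Dodds--Dodds--de Pagter, $0\le x_n\uparrow_n x$ implies $\mu_t(x_n)\uparrow_n\mu_t(x)$ for every $t>0$; combined with the Fack--Kosaki trace formula $\tau(\varphi(|y|))=\int_0^\infty\varphi(\mu_t(y))\,\mathrm{d}t$ and the classical monotone convergence theorem, this yields $\tau(\varphi(x_n/\lambda))\uparrow\tau(\varphi(x/\lambda))$, after which your argument closes correctly. The paper itself takes an even shorter route: having $\mu_t(x_n)\uparrow\mu_t(x)$ and $\sup_n\|\mu(x_n)\|=\sup_n\|x_n\|<\infty$, it quotes the classical Fatou property of $L_\varphi(0,\infty)$ to get $\mu(x)\in L_\varphi(0,\infty)$ and $\|\mu(x)\|=\sup_n\|\mu(x_n)\|$, which is the full conclusion at once. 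So your norm-sandwich framework is workable, but only after replacing the operator-level monotone convergence by the singular-value transfer; without that substitution the proof has a hole at its crucial step.
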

\begin{proof}
Since $x_{n}\in L_{\varphi}(\widetilde{\mathcal{M}},\tau)$, one has that $\mu(x_{n})\in L_{\varphi}(0,\infty)$. From Proposition 1.7 in \cite{Dodds}, if $x_{n}, \, x\in\widetilde{\mathcal{M}}$ and $0\leq x_{n}\uparrow_{n} x$ then $\mu_{t}(x_{n})\uparrow_{n} \mu_{t}(x)$ holds for all $t\geq0$.
Since $\sup_{n}\|x_{n}\|<\infty=\sup_{n}\|\mu(x_{n})\|<\infty$.

Then $\mu(x)\in L_{\varphi}(0,\infty)$ and $\|\mu(x)\|=\sup_{n}\|\mu(x_{n})\|$ by the classical counterpart of Theorem 2.1.

Hence, $x\in L_{\varphi}(\widetilde{\mathcal{M}},\tau)$ and $\|x\|=\sup_{n}\|x_{n}\|.$
\end{proof}
In \cite{Kunze}, Kunze considers  the properties of the space
$$E_{\varphi}(\widetilde{\mathcal{M}},\tau)=\overline{\mathcal{M}\bigcap L_{\varphi}(\widetilde{\mathcal{M}},\tau)}^{\|\cdot\|}.$$
Now we give another characterization of this space. Indeed, set
$$A_{\varphi}(\widetilde{\mathcal{M}},\tau)=\left\{x\in\widetilde{\mathcal{M}}: \tau\left(\varphi\left(\lambda |x|\right)\right)<\infty\,\, for\,\, all\,\,\lambda>0\right\}.$$
It is easy to verify that $A_{\varphi}(\widetilde{\mathcal{M}},\tau)$ is a linear subspace of $L_{\varphi}(\widetilde{\mathcal{M}},\tau)$.  The following theorem shows that $A_{\varphi}(\widetilde{\mathcal{M}},\tau)$ is a closed linear subspace in norm topology and a dense subspace in measure topology of $L_{\varphi}(\widetilde{\mathcal{M}},\tau)$.
\begin{theorem}
The following statements are true:
\begin{enumerate}
  \item $A_{\varphi}(\widetilde{\mathcal{M}},\tau)$ is a closed linear subspace of $L_{\varphi}(\widetilde{\mathcal{M}},\tau)$ under the norm topology;
  \item $A_{\varphi}(\widetilde{\mathcal{M}},\tau)$ is a dense subspace of $L_{\varphi}(\widetilde{\mathcal{M}},\tau)$ under the measure topology.
\end{enumerate}
\end{theorem}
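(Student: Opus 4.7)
The plan is to handle the two parts separately, both relying on convexity of $\varphi$ together with the generalised singular-value calculus.

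For part~(1), I take $\{x_{n}\}\subset A_{\varphi}(\widetilde{\mathcal{M}},\tau)$ with $\|x_{n}-x\|\to 0$ and aim to show $\tau(\varphi(\lambda|x|))<\infty$ for every $\lambda>0$. Fix such a $\lambda$ and pick $n$ with $\|x-x_{n}\|<\tfrac{1}{2\lambda}$, i.e.\ $\|2\lambda(x-x_{n})\|<1$; from the definition of the Luxemburg norm (together with the convexity bound $\varphi(\mu u)\le\mu\varphi(u)$ for $0\le\mu\le 1$) this forces $\tau(\varphi(2\lambda|x-x_{n}|))\le 1$. Applying the standard singular-value triangle inequality $\mu_{t}(x)\le\mu_{t/2}(x_{n})+\mu_{t/2}(x-x_{n})$, the monotonicity and convexity of $\varphi$, and the change of variables $s=t/2$, I would obtain
\[
\tau(\varphi(\lambda|x|))=\int_{0}^{\infty}\!\varphi(\lambda\mu_{t}(x))\,\mathrm{d}t\le \tau(\varphi(2\lambda|x_{n}|))+\tau(\varphi(2\lambda|x-x_{n}|))<\infty,
\]
the first summand being finite because $x_{n}\in A_{\varphi}$ and the second being $\le 1$. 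Since $\lambda>0$ was arbitrary, $x\in A_{\varphi}$.

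For part~(2), I approximate an arbitrary $x\in L_{\varphi}(\widetilde{\mathcal{M}},\tau)$ by spectral truncations. Writing $x=u|x|$ for the polar decomposition and setting $e_{n}=e_{[1/n,\,n]}(|x|)$, define $y_{n}=u|x|e_{n}$. Because $u^{\ast}u$ is the support projection of $|x|$ and $e_{n}$ commutes with $|x|$, one computes $|y_{n}|=|x|e_{n}$, so $\|y_{n}\|_{\mathcal{B}(\mathcal{H})}\le n$ and hence $y_{n}\in\mathcal{M}$. Moreover $\varphi(\lambda|y_{n}|)\le \varphi(\lambda n)\,e_{n}$ via the functional calculus, so
\[
\tau(\varphi(\lambda|y_{n}|))\le \varphi(\lambda n)\,\tau(e_{n})\quad\text{for every }\lambda>0,
\]
and thus $y_{n}\in A_{\varphi}$ provided $\tau(e_{n})<\infty$.

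To secure this, I would prove the auxiliary fact that $x\in L_{\varphi}$ implies $\tau(e_{(t,\infty)}(|x|))<\infty$ for \emph{every} $t>0$: picking $\lambda_{0}>0$ with $\tau(\varphi(\lambda_{0}|x|))<\infty$, the spectral inequality $\varphi(\lambda_{0}|x|)\ge \varphi(\lambda_{0}t)\,e_{(t,\infty)}(|x|)$ combined with $\varphi(\lambda_{0}t)>0$ (which follows from $p>0$ on $(0,\infty)$) gives $\tau(e_{(t,\infty)}(|x|))\le \tau(\varphi(\lambda_{0}|x|))/\varphi(\lambda_{0}t)<\infty$. Consequently $\tau(e_{n})\le \tau(e_{(1/(n+1),\infty)}(|x|))<\infty$, completing $y_{n}\in A_{\varphi}$. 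For the measure convergence, $|x-y_{n}|=|x|(\mathbf{1}-e_{n})$ has spectrum contained in $[0,1/n)\cup(n,\infty)$, so for any $\varepsilon,\delta>0$, selecting $n$ with $1/n<\varepsilon$ and $\tau(e_{(n,\infty)}(|x|))<\delta$ (possible since $x$ is $\tau$-measurable) gives $x-y_{n}\in\mathcal{V}(\varepsilon,\delta)$; that is, $y_{n}\xrightarrow{\tau_{m}}x$. The main obstacle I anticipate is the auxiliary finiteness $\tau(e_{(t,\infty)}(|x|))<\infty$ for all $t>0$ whenever $x\in L_{\varphi}$; once that is in hand, the rest amounts to bookkeeping with polar decompositions, functional calculus of $|x|$, and the convexity device used in~(1).
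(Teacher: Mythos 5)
Your proposal is correct, but it takes a genuinely different route from the paper's proof, and in one place it is more careful. For part (1), the paper transfers the problem to the commutative setting: it invokes the Labuschagne--Majewski correspondence ($x\in A_{\varphi}(\widetilde{\mathcal{M}},\tau)$ iff $\mu(x)\in A_{\varphi}(0,\infty)$), the Dodds--Dodds--de Pagter estimate $\|\mu(x_{n})-\mu(x)\|\leq\|\mu(x_{n}-x)\|=\|x_{n}-x\|$, and then the classical fact that $A_{\varphi}(0,\infty)$ is closed in $L_{\varphi}(0,\infty)$. You instead prove closedness directly at the level of singular values, combining $\mu_{t}(x)\leq\mu_{t/2}(x_{n})+\mu_{t/2}(x-x_{n})$ with convexity and the change of variables to get the clean quantitative bound $\tau(\varphi(\lambda|x|))\leq\tau(\varphi(2\lambda|x_{n}|))+\tau(\varphi(2\lambda|x-x_{n}|))$; this is self-contained (it needs only the Fack--Kosaki identity $\tau(\varphi(|y|))=\int_{0}^{\infty}\varphi(\mu_{t}(y))\,\mathrm{d}t$ and the Luxemburg-norm observation $\|y\|<1\Rightarrow\tau(\varphi(|y|))\leq1$), at the cost of being longer than the paper's citation-based reduction. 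For part (2), the difference is substantive: the paper truncates only from above, setting $x_{n}=u\int_{0}^{n}\lambda\,\mathrm{d}e_{\lambda}(|x|)$, and asserts $x_{n}\in A_{\varphi}(\widetilde{\mathcal{M}},\tau)$ as ``obvious,'' but when $\tau(\mathbf{1})=\infty$ this is not obvious and in general not true --- if $\varphi$ fails the $\Delta_{2}$-condition near zero, a bounded operator whose support projection has infinite trace can lie in $L_{\varphi}\setminus A_{\varphi}$ (one can build such examples exactly in the spirit of the paper's own Remark 3.1, with $\alpha_{k}\downarrow0$ instead of $\alpha_{k}\uparrow\infty$). Your two-sided truncation $y_{n}=u|x|e_{[1/n,n]}(|x|)$, together with your auxiliary Chebyshev-type fact that $x\in L_{\varphi}$ forces $\tau(e_{(t,\infty)}(|x|))<\infty$ for every $t>0$, repairs this: it yields $\tau(\varphi(\lambda|y_{n}|))\leq\varphi(\lambda n)\,\tau(e_{[1/n,n]}(|x|))<\infty$ for all $\lambda>0$, so $y_{n}\in A_{\varphi}$ unconditionally. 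The final measure-convergence step is essentially the same in both arguments (the spectral projection $e_{(\varepsilon,\infty)}(|x-y_{n}|)$ is dominated by $e_{(n,\infty)}(|x|)$, whose trace tends to $0$ by $\tau$-measurability). In short: the paper's route to (1) is shorter if one grants the transfer machinery, yours is elementary; for (2) your version buys genuine rigor in the semifinite case.
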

\begin{proof}
(1) Let $x_n\in A_{\varphi}(\widetilde{\mathcal{M}},\tau)$ and $x \in L_{\varphi}(\widetilde{\mathcal{M}},\tau)$ be given with $x_{n}\rightarrow x$ in follows from Lemma of \cite{Labuschagne} that any $z\in \mathcal{M}$ belongs to $A_{\varphi}(\widetilde{\mathcal{M}})$ if and only if $\mu(z)\in A_{\varphi}(0,\infty)$.

By corollary 4.3 of \cite{Dodds1} we now have that
$$\|\mu(x_{n})-\mu(x)\|\leq \|\mu(x_{n}-x)\|=\|x_{n}-x\|\rightarrow 0.$$
By the classical counterpart of Theorem 2.1 we now have that $\mu(x)\in A_{\varphi}(0,\infty).$

Hence $x\in A_{\varphi}(\widetilde{\mathcal{M}},\tau)$ as required.

(2) Now for any $x\in L_{\varphi}(\widetilde{\mathcal{M}},\tau)$, set
  $$x=u|x|=u\int_{0}^{\infty}\lambda \mathrm{d}e_{\lambda}(|x|)$$
 be the polar decomposition of $x$. For each $n\in\mathbb{N}$, set
$x_{n}=u\int_{0}^{n}\lambda \mathrm{d} e_{\lambda}(|x|),$ then it is obvious that $x_{n}\in A_{\varphi}(\widetilde{\mathcal{M}},\tau)$ and $$x-x_{n}=u|x|e_{(n,\infty)}(|x|)=u\int_{n}^{\infty}\lambda \mathrm{d}e_{\lambda}(|x|).$$

For any $\varepsilon>0,$
\begin{equation}
e_{(\varepsilon,\infty)}(|x-x_{n}|)=
\begin{cases} e_{(n,\infty)}(|x|), &\varepsilon<n, \nonumber\\
e_{(\varepsilon,\infty)}(|x|), &\varepsilon\geq n.
\end{cases}
\end{equation}
Since $x$ is a $\tau$-measurable operator, $\lim_{n}\tau(e_{(n,\infty)}(|x|))=0$, which means that $\lim_{n}\tau(e_{(\varepsilon,\infty)}(|x-x_{n}|))=0$ for any $\varepsilon>0$.
Hence, $A_{\varphi}(\widetilde{\mathcal{M}},\tau)$ is a dense subspace of $L_{\varphi}(\widetilde{\mathcal{M}},\tau)$ under the measure topology.
\end{proof}

In order to study the further properties of $A_{\varphi}(\widetilde{\mathcal{M}},\tau)$, we need the following lemma.

\begin{lemma} By $E_{\varphi}(\widetilde{\mathcal{M}},\tau)=E_{\varphi}$ we denote the set $\overline{\mathcal{M}\bigcap L_{\varphi}(\widetilde{\mathcal{M}},\tau)}^{\|\cdot\|}$.
If $x\in L_{\varphi}(\widetilde{\mathcal{M}},\tau)$ and $\tau(\varphi(|x|))<\infty$, then the distance $d(x,E_{\varphi})$ from $x$ to $E_{\varphi}$ is no more than 1, where $d(x,E_{\varphi})=\inf\left\{\|x-y\|:\,\,y\in E_{\varphi}\right\}.$
\end{lemma}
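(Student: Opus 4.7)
The plan is to exhibit an explicit approximant of $x$ by an element $x_{n}\in\mathcal{M}\cap L_{\varphi}(\widetilde{\mathcal{M}},\tau)$ whose distance to $x$ is at most $1$, using spectral truncation of $|x|$. Take the polar decomposition $x=u|x|=u\int_{0}^{\infty}\lambda\,\mathrm{d}e_{\lambda}(|x|)$ and, for each $n\in\mathbb{N}$, put
\[
x_{n}=u\int_{0}^{n}\lambda\,\mathrm{d}e_{\lambda}(|x|).
\]
Because $\|x_{n}\|_{\mathcal{B}(\mathcal{H})}\leq n$ and $|x_{n}|=|x|e_{[0,n]}(|x|)\leq |x|$, the operator $x_{n}$ lies in $\mathcal{M}$ and satisfies $\tau(\varphi(|x_{n}|))\leq \tau(\varphi(|x|))<\infty$, so $x_{n}\in \mathcal{M}\cap L_{\varphi}(\widetilde{\mathcal{M}},\tau)\subseteq E_{\varphi}$.

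Next I would compute $|x-x_{n}|$ via the polar decomposition. Since $u^{\ast}u$ is the support projection $e_{(0,\infty)}(|x|)$, which dominates $e_{(n,\infty)}(|x|)$, a direct calculation gives
\[
(x-x_{n})^{\ast}(x-x_{n})=e_{(n,\infty)}(|x|)\,|x|^{2}\,e_{(n,\infty)}(|x|)=\bigl(|x|\,e_{(n,\infty)}(|x|)\bigr)^{2},
\]
so $|x-x_{n}|=|x|\,e_{(n,\infty)}(|x|)$. Applying the Borel functional calculus and using $\varphi(0)=0$, one then obtains $\varphi(|x-x_{n}|)=\varphi(|x|)\,e_{(n,\infty)}(|x|)$.

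The final step is a limit argument. The projections $e_{(n,\infty)}(|x|)$ decrease strongly to $0$ as $n\to\infty$, so $\varphi(|x|)\,e_{(n,\infty)}(|x|)$ decreases to $0$ and is dominated by $\varphi(|x|)$, which has finite trace. Normality of $\tau$ therefore yields
\[
\lim_{n\to\infty}\tau\bigl(\varphi(|x-x_{n}|)\bigr)=\lim_{n\to\infty}\tau\bigl(\varphi(|x|)\,e_{(n,\infty)}(|x|)\bigr)=0.
\]
In particular, for all sufficiently large $n$ we have $\tau(\varphi(|x-x_{n}|))\leq 1$, i.e.\ $\|x-x_{n}\|\leq 1$ by the definition of the Luxemburg norm. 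Since $x_{n}\in E_{\varphi}$, this gives $d(x,E_{\varphi})\leq\|x-x_{n}\|\leq 1$, as required.

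The only delicate point is the manipulation leading to $|x-x_{n}|=|x|e_{(n,\infty)}(|x|)$ and the corresponding simplification of $\varphi(|x-x_{n}|)$; everything else is standard functional calculus plus normality of $\tau$ on a trace-finite dominated decreasing net.
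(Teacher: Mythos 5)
Your proof is correct, and it follows the paper's construction -- the spectral truncations $x_{n}=u\int_{0}^{n}\lambda\,\mathrm{d}e_{\lambda}(|x|)$ with the observation that $\tau(\varphi(|x-x_{n}|))=\int_{n}^{\infty}\varphi(\lambda)\,\mathrm{d}\tau(e_{\lambda})\to 0$ -- but it diverges at the concluding estimate, and in a way worth noting. The paper passes from the modular to the norm via the Orlicz norm and the Young inequality, using $\|x-x_{n_{0}}\|\leq\|x-x_{n_{0}}\|^{o}\leq 1+\tau(\varphi(|x-x_{n_{0}}|))<1+\varepsilon$ and then letting $\varepsilon\to 0$; you instead invoke the defining property of the Luxemburg norm, namely that $\tau(\varphi(|y|))\leq 1$ forces $\|y\|\leq 1$ (take $\lambda=1$ in the infimum), which yields $\|x-x_{n}\|\leq 1$ outright for large $n$ with no $\varepsilon$-limit and no appeal to the complementary function $\psi$, the two-norm inequality, or Young's inequality. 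This is the more elementary route, and it correctly stops at the bound $1$: since modular convergence $\tau(\varphi(|x-x_{n}|))\to 0$ does not give norm convergence without $\Delta_{2}$, one cannot improve the conclusion to $d(x,E_{\varphi})=0$, which is precisely why the lemma is stated this way. You also supply two verifications the paper merely asserts: that $x_{n}\in\mathcal{M}\cap L_{\varphi}(\widetilde{\mathcal{M}},\tau)$ (nontrivial, since $\mathcal{M}\not\subseteq L_{\varphi}(\widetilde{\mathcal{M}},\tau)$ when $\tau$ is infinite, so the modular bound $\tau(\varphi(|x_{n}|))\leq\tau(\varphi(|x|))<\infty$ is genuinely needed), and the functional-calculus identity $|x-x_{n}|=|x|e_{(n,\infty)}(|x|)$ via $u^{\ast}u\geq e_{(n,\infty)}(|x|)$, which justifies the tail formula for the modular; your normality argument for $\tau$ on the decreasing dominated sequence $\varphi(|x|)e_{(n,\infty)}(|x|)\downarrow 0$ is equivalent to the paper's direct integral computation.
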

\begin{proof}
Let $x=u|x|$ be the polar decomposition of $x$, where $|x|=\int_{0}^{\infty}\lambda \mathrm{d}e_{\lambda}(|x|)$. For each $n\in\mathbb{N}$, set
$x_{n}=u\int_{0}^{n}\lambda \mathrm{d}e_{\lambda}(|x|).$
Since $\tau(\varphi(|x|))<\infty$, for any $\varepsilon>0$ one can choose an $n_{0}\in\mathbb{N}$ such that $$\tau(\varphi(|x-x_{n_{0}}|)=\int_{n_{0}}^{\infty}\varphi(\lambda) \mathrm{d}\tau(e_{\lambda})<\varepsilon.$$
Since $x_{n}\in E_{\varphi},$\, the Young Inequality implies
$$d(x,E_{\varphi})\leq \|x-x_{n_{0}}\|\leq\|x-x_{n_{0}}\|^{o}\leq1+\tau(\varphi(|x-x_{n_{0}}|)<1+\varepsilon.$$
Therefore, $d(x, E_{\varphi})\leq1$ since $\varepsilon$ is arbitrary.
\end{proof}

The following theorem shows that $A_{\varphi}(\widetilde{\mathcal{M}},\tau)$ is the closure (in the norm topology) of the set of all bounded $\tau$-measurable operators.

\begin{theorem}
$A_{\varphi}(\widetilde{\mathcal{M}},\tau)=E_{\varphi}(\widetilde{\mathcal{M}},\tau)=\overline{\mathcal{M}\bigcap L_{\varphi}(\widetilde{\mathcal{M}},\tau)}^{\|\cdot\|}.$
\end{theorem}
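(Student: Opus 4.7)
The plan is to establish the two inclusions $A_{\varphi}(\widetilde{\mathcal{M}},\tau)\subseteq E_{\varphi}(\widetilde{\mathcal{M}},\tau)$ and $E_{\varphi}(\widetilde{\mathcal{M}},\tau)\subseteq A_{\varphi}(\widetilde{\mathcal{M}},\tau)$ separately, using Lemma 2.3 for the first and Theorem 2.2(1) for the second.

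For $A_{\varphi}\subseteq E_{\varphi}$, I would use a homogeneity argument built on Lemma 2.3. If $x\in A_{\varphi}$, then for every $n\in\mathbb{N}$ the operator $nx$ still lies in $L_{\varphi}$ with $\tau(\varphi(|nx|))=\tau(\varphi(n|x|))<\infty$, so Lemma 2.3 applied to $nx$ yields $d(nx,E_{\varphi})\leq 1$. Since $E_{\varphi}$ is a linear subspace, this rescales to $d(x,E_{\varphi})\leq 1/n$, and letting $n\to\infty$, together with the fact that $E_{\varphi}$ is norm-closed by construction, forces $x\in E_{\varphi}$.

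For the reverse inclusion $E_{\varphi}\subseteq A_{\varphi}$, Theorem 2.2(1) already tells us that $A_{\varphi}$ is norm-closed, so it is enough to verify the seed inclusion $\mathcal{M}\cap L_{\varphi}\subseteq A_{\varphi}$. Take $z\in\mathcal{M}$ with $\|z\|_{\infty}\leq M$ and $\tau(\varphi(\lambda_{0}|z|))<\infty$ for some $\lambda_{0}>0$; the goal is $\tau(\varphi(\lambda|z|))<\infty$ for every $\lambda>0$. For $\lambda\leq\lambda_{0}$, monotonicity of $\varphi$ suffices. For $\lambda>\lambda_{0}$, I would split $|z|$ using the spectral projection $e_{(\varepsilon,M]}(|z|)$ for some $\varepsilon\in(0,M)$: on this projection the trace is finite because $\tau(e_{(\varepsilon,\infty)}(|z|))\leq\tau(\varphi(\lambda_{0}|z|))/\varphi(\lambda_{0}\varepsilon)<\infty$, whence $\tau(\varphi(\lambda|z|)\,e_{(\varepsilon,M]}(|z|))\leq\varphi(\lambda M)\,\tau(e_{(\varepsilon,M]}(|z|))<\infty$.

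The main obstacle is controlling the complementary low-spectrum piece $e_{(0,\varepsilon]}(|z|)$, where convexity only gives $\varphi(\lambda s)\leq (s/\varepsilon)\varphi(\lambda\varepsilon)$ and the direct trace estimate need not terminate. To close this gap I would pass to the singular-value side via the identity $\tau(\varphi(\lambda|z|))=\int_{0}^{\infty}\varphi(\lambda\mu_{t}(z))\,dt$, invoking the Labuschagne-type equivalence $z\in A_{\varphi}\iff\mu(z)\in A_{\varphi}(0,\infty)$ that was already cited in the proof of Theorem 2.2(1). This reduces the remaining question to the classical commutative counterpart, namely that any bounded decreasing function in $L_{\varphi}(0,\infty)$ belongs to $A_{\varphi}(0,\infty)$; transferring the conclusion back through $\mu$ secures $\mathcal{M}\cap L_{\varphi}\subseteq A_{\varphi}$, and combined with the norm-closedness of $A_{\varphi}$ from Theorem 2.2(1) produces the claimed equality $A_{\varphi}=E_{\varphi}$.
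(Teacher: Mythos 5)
Your first inclusion is exactly the paper's argument (scale by $n$, apply Lemma 2.3 to $nx$, use linearity and norm-closedness of $E_{\varphi}$), and it is correct. The genuine gap is in the second inclusion, at precisely the step you yourself flag as the main obstacle. The classical statement you invoke to close it --- that every bounded decreasing function in $L_{\varphi}(0,\infty)$ belongs to $A_{\varphi}(0,\infty)$ --- is false when $\varphi$ fails the $\Delta_{2}$-condition near zero. Concretely, take $p(t)=e^{-1/t}$ on $(0,1]$, extended to be nondecreasing with $p(t)\to\infty$, and $\varphi(u)=\int_{0}^{u}p(t)\,\mathrm{d}t$; then $\varphi(s)\leq s e^{-1/s}$ and $\varphi(2s)\geq \frac{s}{2}e^{-2/(3s)}$ for small $s$, so $\varphi(2s)/\varphi(s)\geq\frac{1}{2}e^{1/(3s)}\to\infty$ as $s\downarrow 0$. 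Now let $f=\sum_{k}k^{-1}\chi_{A_{k}}$ be the decreasing step function with $m(A_{k})=1/\bigl(k^{2}\varphi(1/k)\bigr)$: then $\int\varphi(f)=\sum_{k}k^{-2}<\infty$ while $\int\varphi(2f)\geq\sum_{k}\frac{e^{k/3}}{2k^{2}}=\infty$. Thus $f$ is bounded, decreasing and in $L_{\varphi}(0,\infty)$, yet $f\notin A_{\varphi}(0,\infty)$. Taking $\mathcal{M}=L^{\infty}(0,\infty)$ with $\tau(g)=\int g$, this shows the seed inclusion $\mathcal{M}\cap L_{\varphi}(\widetilde{\mathcal{M}},\tau)\subseteq A_{\varphi}(\widetilde{\mathcal{M}},\tau)$ itself fails for an infinite trace, so no reduction through $\mu$ can rescue your argument: the low-spectrum piece is a genuine obstruction, not a technicality, and your Chebyshev estimate for the part $e_{(\varepsilon,M]}(|z|)$ (which is correct) is the most that can be salvaged.

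It is worth knowing how the paper handles this same step: it simply asserts that ``$\mathcal{M}$ is contained in $A_{\varphi}(\widetilde{\mathcal{M}},\tau)$'' and concludes $E_{\varphi}\subseteq A_{\varphi}$ from the norm-closedness of $A_{\varphi}$ (Theorem 2.2(1)). That assertion is valid when $\tau(\mathbf{1})<\infty$, since then $\tau(\varphi(\lambda|z|))\leq\varphi(\lambda\|z\|_{\infty})\tau(\mathbf{1})<\infty$ for all $\lambda>0$, but in the stated semifinite setting it is false: elements of $\mathcal{M}$ need not lie in $L_{\varphi}$ at all (e.g.\ $\mathbf{1}$ when $\tau(\mathbf{1})=\infty$), and by the counterexample above not even $\mathcal{M}\cap L_{\varphi}\subseteq A_{\varphi}$ holds in general. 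So your proposal follows the same two-part skeleton as the paper, and by honestly attempting the step the paper glosses over, it exposes that this step (and hence the theorem as stated, absent a hypothesis such as $\varphi\in\Delta_{2}$ near zero, a finite trace, or replacing $\mathcal{M}$ by the bounded operators whose support projection has finite trace, for which $\tau(\varphi(\lambda|z|))\leq\varphi(\lambda\|z\|_{\infty})\tau(\mathrm{supp}\,|z|)<\infty$) cannot be proved along this route. Your proof is therefore incomplete at the same point where the paper's is, and the reduction you propose does not repair it.
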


\begin{proof}
For any $x\in A_{\varphi}(\widetilde{\mathcal{M}},\tau)$ and $k\geq1$, we have $kx\in A_{\varphi}(\widetilde{\mathcal{M}},\tau).$ Therefore $d(kx,E_{\varphi})\leq1$ or $d(x,E_{\varphi})\leq\frac{1}{k}$. Since $k$ is arbitrary, then we have $x\in E_{\varphi}$, $i.e.$, $A_{\varphi}(\widetilde{\mathcal{M}},\tau)\subseteq E_{\varphi}$.

On the other hand, observing that $\mathcal{M}$ is contained in $A_{\varphi}(\widetilde{\mathcal{M}},\tau)$ and that $A_{\varphi}(\widetilde{\mathcal{M}},\tau)$ is a closed subspace of $L_{\varphi}(\widetilde{\mathcal{M}},\tau)$ by (1) of Theorem 2.2, then $E_{\varphi}$ is contained in $A_{\varphi}(\widetilde{\mathcal{M}},\tau)$, which implies that $A_{\varphi}(\widetilde{\mathcal{M}},\tau)=E_{\varphi}.$

Moreover, by the definition of $A_{\varphi}(\widetilde{\mathcal{M}},\tau)$, we get $$A_{\varphi}(\widetilde{\mathcal{M}},\tau)=\overline{\mathcal{M}\bigcap L_{\varphi}(\widetilde{\mathcal{M}},\tau)}^{\|\cdot\|}=E_{\varphi}(\widetilde{\mathcal{M}},\tau).$$
%Next, we will show that $E_{\varphi}(\widetilde{\mathcal{M}},\tau)$ is dense in $L_{\varphi}(\widetilde{\mathcal{M}},\tau)$. For any $x\in L_{\varphi}(\widetilde{\mathcal{M}},\tau)$. Let $x=u|x|$ be the polar decomposition and $|x|=\int^{\infty}_{0}\lambda de_{\lambda}$ be the spectral decomposition. Then the sequence $\{u\int^{n}_{0}\lambda de_{\lambda}\}_{n=1,2,\ldots}$ in $E_{\varphi}(\widetilde{\mathcal{M}},\tau)$ and tends to $x$ as $n\rightarrow\infty$
\end{proof}
%Similar to functional space, we give the definition of norm absolutely %continuous in $L_{\varphi}(\widetilde{\mathcal{M}},\tau)$.
%\begin{definition}
%If $x\in L_{\varphi}(\widetilde{\mathcal{M}},\tau)$ and
%$$\lim_{n\rightarrow\infty}\|xe_{(n,\infty)}(|x|)\|=0,$$
%then $x$ is called norm absolutely continuous.
%\end{definition}
In the following, similar to the classical case, we still use $E_{\varphi}(\widetilde{\mathcal{M}},\tau)$ to denote the set $\left\{x\in\widetilde{\mathcal{M}}: \tau\left(\varphi\left(\lambda |x|\right)\right)<\infty\,\, for\,\, all\,\,\lambda>0\right\}.$
\begin{theorem}
Let $x\in \widetilde{\mathcal{M}}$ be given, the following statements are equivalent:
\begin{enumerate}
  \item $x\in E_{\varphi}(\widetilde{\mathcal{M}},\tau)$.
  \item $\lim_{n\rightarrow\infty}\|x-x_{n}\|=0$,\, where $x_{n}=u\int_{0}^{n}\lambda  \mathrm{d} e_{\lambda}(|x|).$
\end{enumerate}
%$(3)$ $x$ is norm absolutely continuous.
\end{theorem}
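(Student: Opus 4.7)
The plan is to address both implications using the characterization $E_\varphi=\overline{\mathcal{M}\cap L_\varphi}^{\|\cdot\|}$ established in Theorem~2.3, combined with a direct spectral computation of the tail modular $\tau(\varphi(\lambda|x-x_n|))$.

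For $(2)\Rightarrow(1)$, the key observation will be that each truncation $x_n$ is bounded (by $n$ in operator norm) and satisfies $|x_n|\le|x|$, so $\tau(\varphi(\lambda_0|x_n|))\le\tau(\varphi(\lambda_0|x|))<\infty$ for the $\lambda_0$ witnessing $x\in L_\varphi$; hence $x_n\in\mathcal{M}\cap L_\varphi$. Consequently, norm convergence $x_n\to x$ forces $x$ into $\overline{\mathcal{M}\cap L_\varphi}^{\|\cdot\|}$, and Theorem~2.3 identifies this closure with the set $E_\varphi$ as now redefined.

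For $(1)\Rightarrow(2)$, I would use the polar decomposition to identify $|x-x_n|=|x|e_{(n,\infty)}(|x|)$ and then, via Borel functional calculus applied to $|x|$, obtain
\[
\tau\bigl(\varphi(\lambda|x-x_n|)\bigr)=\int_{n}^{\infty}\varphi(\lambda t)\,d\tau(e_t(|x|))
\]
for every $\lambda>0$. The hypothesis $x\in E_\varphi$ says that this integral is finite over all of $[0,\infty)$ for every $\lambda$, so its tail vanishes as $n\to\infty$. To convert this modular convergence into Luxemburg norm convergence, given $\varepsilon>0$ I would specialize to $\lambda=1/\varepsilon$ and conclude $\tau(\varphi(|x-x_n|/\varepsilon))\le 1$ for $n$ large, i.e.\ $\|x-x_n\|\le\varepsilon$.

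The main structural subtlety lies in the distinction between ``$\tau(\varphi(\lambda|x|))<\infty$ for some $\lambda$'' (which defines $L_\varphi$) and ``for all $\lambda$'' (which defines $E_\varphi$); only the latter allows the choice $\lambda=1/\varepsilon$ for arbitrary $\varepsilon>0$, and it is exactly this freedom that upgrades the finiteness of a single integral into norm convergence of the truncations. All remaining work amounts to standard spectral calculus together with the fact that the tails of a finite Lebesgue--Stieltjes integral tend to zero.
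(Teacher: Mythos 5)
Your proposal is correct and follows essentially the same route as the paper: the same truncations $x_{n}=u\int_{0}^{n}\lambda\,\mathrm{d}e_{\lambda}(|x|)$, the same spectral-tail identity $\tau(\varphi(\lambda|x-x_{n}|))=\int_{n}^{\infty}\varphi(\lambda t)\,\mathrm{d}\tau(e_{t}(|x|))$ for $(1)\Rightarrow(2)$, and the same appeal to the closure characterization $E_{\varphi}=\overline{\mathcal{M}\cap L_{\varphi}}^{\|\cdot\|}$ for $(2)\Rightarrow(1)$. Your two deviations are mild improvements rather than a different method: you pass from $\tau(\varphi(|x-x_{n}|/\varepsilon))\leq 1$ to $\|x-x_{n}\|\leq\varepsilon$ directly by the definition of the Luxemburg norm, where the paper detours through the Orlicz norm and the Young inequality to get the weaker bound $\|x-x_{n}\|\leq 2\varepsilon$, and you explicitly check $x_{n}\in\mathcal{M}\cap L_{\varphi}$ via $|x_{n}|\leq|x|$ (tacitly using $x\in L_{\varphi}$, as the theorem's context intends), a point the paper glosses over by asserting only $x_{n}\in\mathcal{M}$.
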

\begin{proof}
$(1)\Rightarrow(2)$. Given $\varepsilon>0,$ we have $$\tau\left(\varphi\left(\frac{|x|}{\varepsilon}\right)\right)=\int^{\infty}_{0}\varphi\left(\frac{\lambda}{\varepsilon}\right) \mathrm{d}\tau(e_{\lambda}(|x|))<\infty,$$
this is because $x\in E_{\varphi}(\widetilde{\mathcal{M}},\tau)$.

Since $x_{n}=u\int_{0}^{n}\lambda \mathrm{d}e_{\lambda}(|x|),$ when $n$ is large enough it follows that $$\tau\left(\varphi\left(\frac{|x-x_{n}|}{\varepsilon}\right)\right)=\int^{\infty}_{n}\varphi\left(\frac{\lambda}{\varepsilon}\right) \mathrm{d}\tau(e_{\lambda}(|x|))\leq1.$$

Hence, by the Young Inequality,
$$\left\|\frac{x-x_{n}}{\varepsilon}\right\|\leq\left\|\frac{x-x_{n}}{\varepsilon}\right\|^{o}\leq1
+\tau\left(\varphi\left(\frac{|x-x_{n}|}{\varepsilon}\right)\right)\leq2$$
for such $n\in\mathbb{N}$. This yields $\|x-x_{n}\|\rightarrow0$ as $n\rightarrow\infty$, since $\varepsilon$ is arbitrary.

$(2)\Rightarrow(1)$. Since $x_{n}=u\int_{0}^{n}\lambda \mathrm{d}e_{\lambda}(|x|)$ , then $x_{n}\in\mathcal{M}$.
If $\lim_{n\rightarrow \infty}\|x-x_{n}\|=0,$ then $x\in E_{\varphi}=E_{\varphi}(\widetilde{\mathcal{M}},\tau)$ by definition.
\end{proof}

\section{The properties of $L_{\varphi}(\widetilde{\mathcal{M}},\tau)$ for $\varphi\in \Delta_2$}

In this section, we will prove that if the Orlicz function $\varphi$ satisfies the $\Delta _2$ condition (for short, denote it by $\varphi\in \Delta_2$), namely, there exists a constant $k>0$ such that for all $u>0$,
$$\varphi(2u)\leq k\varphi(u),$$
then $E_{\varphi}(\widetilde{\mathcal{M}},\tau)$ is uniformly monotone, and
$$E_{\varphi}(\widetilde{\mathcal{M}},\tau)=L_{\varphi}(\widetilde{\mathcal{M}},\tau).$$
Using Lemma 2.1 of \cite{Labuschagne}, and the fact that $\|\mu(x)\|=\|x\|$ we can get following Lemmas from the classical counterparts of these Lemmas applied to $\mu(x)$.
\begin{lemma}
Suppose $\varphi\in\Delta_{2}$ and $x\in L_{\varphi}(\widetilde{\mathcal{M}},\tau).$ For any $\varepsilon>0,$ there exists a $\delta(\varepsilon)>0$ such that $\tau(\varphi(|x|))\geq\delta$ whenever $\|x\|\geq\varepsilon.$
\end{lemma}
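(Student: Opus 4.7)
The plan is to reduce the statement to its classical commutative analogue by transferring everything to the generalized singular value function $\mu(x)\in L_{\varphi}(0,\infty)$, exactly as the sentence preceding the lemma suggests. Concretely, I would invoke Lemma~2.1 of \cite{Labuschagne} to obtain the trace--integral identity $\tau(\varphi(|x|))=\int_{0}^{\infty}\varphi(\mu_{t}(x))\,\mathrm{d}t$, together with the rearrangement-invariance identity $\|x\|=\|\mu(x)\|_{L_{\varphi}(0,\infty)}$. With these two bridges in place, the implication ``$\|x\|\geq\varepsilon\Rightarrow\tau(\varphi(|x|))\geq\delta(\varepsilon)$'' becomes the classical statement applied to $\mu(x)$, and it suffices to prove the latter.

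For the classical step, starting from $\|\mu(x)\|_{L_{\varphi}(0,\infty)}\geq\varepsilon$, the definition of the Luxemburg norm forces the value $\lambda=\varepsilon/2$ to lie outside the admissible set, so
$$\int_{0}^{\infty}\varphi\!\left(\frac{2\mu_{t}(x)}{\varepsilon}\right)\mathrm{d}t\;\geq\;1.$$
I would then pick $n\in\mathbb{N}$ with $2^{n}\geq 2/\varepsilon$ and iterate the $\Delta_{2}$-inequality $\varphi(2u)\leq k\varphi(u)$ $n$ times to get the pointwise bound $\varphi(2u/\varepsilon)\leq k^{n}\varphi(u)$ for all $u\geq 0$. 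Integrating this pointwise estimate and combining with the previous display yields $\int_{0}^{\infty}\varphi(\mu_{t}(x))\,\mathrm{d}t\geq k^{-n}$, so that $\delta(\varepsilon):=k^{-n}$ does the job. Pulling back through the trace--integral identity gives $\tau(\varphi(|x|))\geq\delta(\varepsilon)$.

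The main obstacle is really the reduction itself rather than the classical manipulation. One has to verify that Labuschagne's identity genuinely applies to the class of Orlicz functions adopted in Definition~1.8 of this paper (in particular that the functional calculus makes $\varphi(|x|)$ a positive $\tau$-measurable operator whose trace coincides with the Lebesgue integral of $\varphi\circ\mu_{t}(x)$), and that $\|x\|=\|\mu(x)\|$ is being quoted in the correct form for the Luxemburg norm used here. Once those two bridging facts are in hand, the $\Delta_{2}$-iteration is the routine Orlicz-space device that converts the qualitative constraint coming from $\lambda=\varepsilon/2$ into the required quantitative lower bound depending only on $\varepsilon$.
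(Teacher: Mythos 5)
Your proposal is correct and follows exactly the route the paper intends: the paper gives no written proof of this lemma beyond the remark that it follows from its classical counterpart via Lemma~2.1 of the Labuschagne--Majewski reference (the identity $\tau(\varphi(|x|))=\int_{0}^{\infty}\varphi(\mu_{t}(x))\,\mathrm{d}t$) together with $\|x\|=\|\mu(x)\|$, which is precisely your reduction. Your only addition is to write out the classical step explicitly --- the Luxemburg-norm observation at $\lambda=\varepsilon/2$ plus the iterated $\Delta_{2}$-bound $\varphi(2u/\varepsilon)\leq k^{n}\varphi(u)$ giving $\delta(\varepsilon)=k^{-n}$ --- and that argument is sound, so you have in effect supplied the details the paper leaves to the classical literature.
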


\begin{lemma}
Suppose $\varphi\in\Delta_{2}$ and $x\in L_{\varphi}(\widetilde{\mathcal{M}},\tau).$ For any $\varepsilon\in(0,1),$ there exists a $\delta(\varepsilon)\in(0,1)$ such that $\|x\|\leq1-\delta$ whenever $\tau(\varphi(|x|))\leq1-\varepsilon.$
\end{lemma}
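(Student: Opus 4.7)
The plan is to follow the paper's own strategy announced just before Lemma~3.1, namely to deduce the lemma from its classical counterpart in $L^{\varphi}(0,\infty)$ applied to $f=\mu(x)$. Two facts make the reduction mechanical: Lemma~2.1 of \cite{Labuschagne} gives $\tau(\varphi(|x|))=\int_{0}^{\infty}\varphi(\mu_{t}(x))\,\mathrm{d}t$, turning the modular hypothesis into $\int_{0}^{\infty}\varphi(\mu_{t}(x))\,\mathrm{d}t\leq 1-\varepsilon$, and the equality $\|x\|=\|\mu(x)\|_{\varphi}$ lets one transfer the conclusion back. So the real content is the classical statement: if $\varphi\in\Delta_{2}$ and $f\in L^{\varphi}(0,\infty)$ with $\int\varphi(|f|)\,\mathrm{d}t\leq 1-\varepsilon$, then $\|f\|_{\varphi}\leq 1-\delta(\varepsilon)$.

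The first step in the classical argument is to upgrade the $\Delta_{2}$-condition to a small-perturbation estimate: for every $\eta>0$ there exists $\beta=\beta(\eta,k)\in(0,1)$ such that
\[
\varphi((1+\beta)u)\leq (1+\eta)\varphi(u)\qquad(u\geq 0).
\]
I would obtain this by writing $(1+\beta)u=(1-\beta)u+\beta\cdot(2u)$ for $\beta\in(0,1)$ and combining convexity of $\varphi$ with $\Delta_{2}$:
\[
\varphi((1+\beta)u)\leq (1-\beta)\varphi(u)+\beta\varphi(2u)\leq \bigl(1+\beta(k-1)\bigr)\varphi(u),
\]
so $\beta=\eta/(k-1)$ works; note that $k\geq 2$ automatically because convexity of $\varphi$ with $\varphi(0)=0$ already forces $\varphi(2u)\geq 2\varphi(u)$.

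Given $\varepsilon\in(0,1)$, I would choose $\eta>0$ small enough that $(1+\eta)(1-\varepsilon)\leq 1$ (for instance $\eta=\varepsilon/(1-\varepsilon)$) and take the corresponding $\beta$. Integrating the pointwise estimate against $f=\mu(x)$ gives
\[
\int_{0}^{\infty}\varphi\bigl((1+\beta)\mu_{t}(x)\bigr)\,\mathrm{d}t\leq (1+\eta)\int_{0}^{\infty}\varphi(\mu_{t}(x))\,\mathrm{d}t\leq 1,
\]
so by the very definition of the Luxemburg norm $\|(1+\beta)\mu(x)\|_{\varphi}\leq 1$, and hence $\|x\|=\|\mu(x)\|_{\varphi}\leq 1/(1+\beta)=1-\delta$ with $\delta=\beta/(1+\beta)\in(0,1)$, which depends only on $\varepsilon$ and $k$.

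The only nontrivial obstacle is the quantitative upgrade of the $\Delta_{2}$-condition above; once that pointwise inequality is in hand, the rest is a short scaling manipulation together with the definition of the Luxemburg norm and the two identifications $\tau(\varphi(|x|))=\int_{0}^{\infty}\varphi(\mu_{t}(x))\,\mathrm{d}t$ and $\|x\|=\|\mu(x)\|_{\varphi}$. If one prefers to stay in the noncommutative setting throughout, the pointwise estimate $\varphi((1+\beta)u)\leq(1+\eta)\varphi(u)$ can instead be applied via functional calculus to $|x|$ and then $\tau$ taken, yielding the same conclusion without passing through $\mu(x)$.
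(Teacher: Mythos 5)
Your proposal is correct and follows the same route the paper takes: the paper gives no written-out proof of this lemma, merely noting before Lemma~3.1 that it follows from the classical counterpart via the identity $\tau(\varphi(|x|))=\int_{0}^{\infty}\varphi(\mu_{t}(x))\,\mathrm{d}t$ and $\|x\|=\|\mu(x)\|_{\varphi}$, which is exactly your reduction. You go further by actually proving the classical statement, and your quantitative $\Delta_{2}$ upgrade $\varphi((1+\beta)u)\leq(1+\beta(k-1))\varphi(u)$ (using $(1+\beta)u=(1-\beta)u+\beta\cdot 2u$ and convexity) together with the Luxemburg-norm scaling is sound, as is your observation that one can equally apply the scalar inequality to $|x|$ by functional calculus and avoid $\mu(x)$ entirely.
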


\begin{lemma}
Suppose $\varphi\in\Delta_{2}$ and $x\in L_{\varphi}(\widetilde{\mathcal{M}},\tau).$ For any $\varepsilon\in(0,1),$ there exists a $\delta(\varepsilon)\in(0,1)$ such that $\|x\|\geq1+\delta$ whenever $\tau(\varphi(|x|))\geq1+\varepsilon.$
\end{lemma}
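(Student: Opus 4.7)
The plan is to follow the template set by the remark just above the three lemmas: reduce the noncommutative statement to its classical counterpart on $(0,\infty)$ via the generalized singular value function. Two bridging facts are needed. First, by Lemma 2.1 of \cite{Labuschagne} together with the spectral theorem, one has the trace-to-integral identity $\tau(\varphi(|x|)) = \int_0^{\infty}\varphi(\mu_t(x))\,dt$, so the hypothesis $\tau(\varphi(|x|)) \geq 1+\varepsilon$ rewrites as $\int_0^{\infty}\varphi(\mu_t(x))\,dt \geq 1+\varepsilon$. Second, the noncommutative Luxemburg norm agrees with the commutative one on the singular value function, $\|x\| = \|\mu(x)\|_{L_\varphi(0,\infty)}$, which is immediate from the definition of both norms. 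Thus it suffices to establish the commutative analogue for $f = \mu(x) \in L_\varphi(0,\infty)$.

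For the classical statement I would argue by contrapositive. Suppose $\|f\|_{L_\varphi(0,\infty)} < 1+\delta$; by the definition of the Luxemburg norm, $\int_0^{\infty}\varphi\bigl(|f|/(1+\delta)\bigr)\,dt \leq 1$. The goal is to show that, for $\delta=\delta(\varepsilon)$ chosen small enough (depending only on $\varepsilon$ and the $\Delta_2$-constant $k$), this forces $\int\varphi(|f|)\,dt < 1+\varepsilon$. Writing $|f| = (1+\delta)\bigl(|f|/(1+\delta)\bigr)$ and invoking the $\Delta_2$-condition, one extracts a constant $C(\delta)$ with $\varphi\bigl((1+\delta)s\bigr) \leq C(\delta)\varphi(s)$ for all $s \geq 0$, and with $C(\delta)\downarrow 1$ as $\delta\downarrow 0$. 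Substituting into the modular integral yields $\int\varphi(|f|)\,dt \leq C(\delta)\cdot 1 = C(\delta)$, and one then picks $\delta=\delta(\varepsilon)$ with $C(\delta) < 1+\varepsilon$.

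The main obstacle is extracting the constant $C(\delta)$ with $C(\delta) \to 1$ from the bare $\Delta_2$-bound $\varphi(2u) \leq k\varphi(u)$. The inequality $\varphi(2u) \leq k\varphi(u)$ only gives boundedness for a fixed dilation factor $2$, not a quantitative control that degrades to $1$ as the factor shrinks to $1$. The standard trick is to combine the pointwise continuity $\varphi((1+\delta)s)/\varphi(s) \to 1$ (valid on $\{s>0\}$) with the uniform domination $\varphi((1+\delta)s) \leq \varphi(2s) \leq k\varphi(s)$ for $\delta<1$, and to convert this to a statement about the modular by a dominated-convergence argument on $\int\varphi(|f|/(1+\delta))\,dt$. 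Since the paper explicitly defers this to the classical case and cites it as known, I would not reproduce the full calculation but rather package the reduction (Lemma 2.1 of \cite{Labuschagne} plus $\|x\|=\|\mu(x)\|$) as the content of the proof and quote the classical Orlicz-space result for $L_\varphi(0,\infty)$ as a black box.
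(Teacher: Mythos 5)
Your proposal is correct and matches the paper's own treatment exactly: the paper gives no separate proof of this lemma, stating only that it follows from the classical counterpart applied to $\mu(x)$ via Lemma 2.1 of \cite{Labuschagne} and the identity $\|x\|=\|\mu(x)\|$, which is precisely your reduction. Your classical sketch is also sound --- the key constant comes most directly from convexity plus $\Delta_2$, namely $\varphi((1+\delta)u)\leq(1-\delta)\varphi(u)+\delta\varphi(2u)\leq(1+\delta(k-1))\varphi(u)$, so $C(\delta)=1+\delta(k-1)\to1$ uniformly in $u$ (the dominated-convergence variant you mention would only give a $\delta$ depending on $f$, so the convexity interpolation is the right route).
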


\begin{theorem}
Assume $\varphi\in\Delta_{2}$.  Given any $L>0$ and $\varepsilon>0,$ there exists $\delta(L,\varepsilon)>0$ such that
$\tau(\varphi(|x|))\leq L$ and $ \tau(\varphi(|y|))\leq\delta$, implies that $$|\tau(\varphi(|x+y|))-\tau(\varphi(|x|))|<\varepsilon.$$
\end{theorem}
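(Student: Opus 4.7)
The plan is to reduce the operator estimate to a scalar inequality for $\varphi$, transfer to the commutative side through the Fack--Kosaki identity $\tau(\varphi(|z|))=\int_0^\infty \varphi(\mu_s(z))\,ds$ (the device encoded in Lemma~2.1 of \cite{Labuschagne} and used repeatedly in the paper), and combine with the weak--majorization estimate $\int_0^t \mu_s(x+y)\,ds \leq \int_0^t(\mu_s(x)+\mu_s(y))\,ds$.

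The scalar backbone is the following inequality: for every $\eta>0$ there is a constant $C(\eta)>0$ such that
\[
\varphi(a+b)\leq (1+\eta)\,\varphi(a)+C(\eta)\,\varphi(b),\qquad a,b\geq 0.
\]
I would prove this by splitting on whether $b\leq \sigma a$ for a small $\sigma=\sigma(\eta)$. On the small-$b$ side one has $a+b\leq (1+\sigma)a$, and $\Delta_2$ combined with convexity of $\varphi$ forces $\sup_{u>0}\varphi((1+\sigma)u)/\varphi(u)\to 1$ as $\sigma\downarrow 0$, so $\varphi(a+b)\leq (1+\eta)\varphi(a)$ for $\sigma$ sufficiently small. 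On the large-$b$ side $a+b\leq (1+\sigma^{-1})b$, and iterating $\varphi(2u)\leq k\varphi(u)$ yields $\varphi(a+b)\leq C(\eta)\varphi(b)$.

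With the scalar inequality in hand, evaluate it at $a=\mu_s(x)$ and $b=\mu_s(y)$ and integrate over $s\in(0,\infty)$. Since $s\mapsto \mu_s(x)+\mu_s(y)$ is non-increasing and weakly majorizes $\mu_s(x+y)$, the Hardy--Littlewood--P\'olya principle (applied to the convex non-decreasing $\varphi$) gives
\[
\tau(\varphi(|x+y|))=\int_0^\infty\varphi(\mu_s(x+y))\,ds\leq \int_0^\infty \varphi(\mu_s(x)+\mu_s(y))\,ds,
\]
which after invoking the pointwise bound becomes $\tau(\varphi(|x+y|))\leq (1+\eta)\tau(\varphi(|x|))+C(\eta)\tau(\varphi(|y|))$. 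Applying exactly the same argument to the decomposition $|x|=|(x+y)+(-y)|$, and using $\mu_s(-y)=\mu_s(y)$, produces the companion bound $\tau(\varphi(|x|))\leq (1+\eta)\tau(\varphi(|x+y|))+C(\eta)\tau(\varphi(|y|))$.

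To finish, given $L$ and $\varepsilon$ I would choose $\eta$ so small that $3\eta L<\varepsilon/2$, and then take $\delta<\varepsilon/(2\,C(\eta))$; under the hypotheses $\tau(\varphi(|x|))\leq L$ and $\tau(\varphi(|y|))\leq \delta$ the two inequalities above combine to yield $|\tau(\varphi(|x+y|))-\tau(\varphi(|x|))|<\varepsilon$. The principal difficulty is the scalar inequality, specifically extracting the uniform-in-$u$ behavior of $\varphi((1+\sigma)u)/\varphi(u)$ near $\sigma=0$ from $\Delta_2$ (via the consequence $up(u)\leq k\varphi(u)$); everything else is routine bookkeeping with the Fack--Kosaki identity and classical rearrangement tools.
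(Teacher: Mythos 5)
Your proposal is correct, but it takes a genuinely different route from the paper. The paper's proof stays tied to the Luxemburg norm: it introduces the auxiliary quantity $h=\sup\{\tau(\varphi(2|x|+2|y|)):\tau(\varphi(|x|))\leq L,\ \tau(\varphi(|y|))\leq 1\}$ (finite by $\Delta_2$), sets $\beta=\varepsilon/h$, invokes its Lemma~3.2 (the $\Delta_2$ modular-to-norm implication) to force $\bigl\|\tfrac{2}{\beta}y\bigr\|\leq 1$, and then runs a convex-combination split $\mu_t(|x|)+\mu_t(|y|)=(1-\beta)\mu_t(|x|)+\beta\bigl(\mu_t(|x|)+\tfrac{1}{\beta}\mu_t(|y|)\bigr)$ inside $\varphi$, finishing with the estimate $\tau(\varphi(z))\leq\|z\|$ valid for $\|z\|\leq 1$. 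You instead work entirely at the modular/scalar level: the two-parameter inequality $\varphi(a+b)\leq(1+\eta)\varphi(a)+C(\eta)\varphi(b)$, which does follow from $\Delta_2$ exactly as you indicate (from $up(u)\leq\varphi(2u)\leq k\varphi(u)$ one gets $\varphi((1+\sigma)u)\leq\varphi(u)+\sigma u\,p(2u)\leq(1+\tfrac{k^2\sigma}{2})\varphi(u)$ uniformly in $u$, and the large-$b$ branch is a finite iteration of $\Delta_2$). This bypasses Lemma~3.2 and the auxiliary supremum $h$ altogether and yields an explicit $\delta(L,\varepsilon)$. Both proofs share the Fack--Kosaki transfer $\tau(\varphi(|z|))=\int_0^\infty\varphi(\mu_s(z))\,\mathrm{d}s$ and the same symmetric trick $(x,y)\mapsto(x+y,-y)$ for the reverse inequality. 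One further point in your favor: the paper's displayed pointwise step $\mu_t(u|x|u^{\ast}+v|y|v^{\ast})\leq\mu_t(u|x|u^{\ast})+\mu_t(v|y|v^{\ast})$ is not valid for a fixed $t$ in general (only $\mu_{t+s}(a+b)\leq\mu_t(a)+\mu_s(b)$ holds); the weak submajorization $\int_0^t\mu_s(x+y)\,\mathrm{d}s\leq\int_0^t\bigl(\mu_s(x)+\mu_s(y)\bigr)\mathrm{d}s$ combined with the Hardy--Littlewood--P\'olya principle for convex nondecreasing $\varphi$, which is precisely the step you use, is the rigorous repair of that line. Your final constant-chasing is slightly loose in the reverse direction (there $\eta$ multiplies $\tau(\varphi(|x+y|))$, which is bounded by $(1+\eta)L+C\delta$ rather than $L$), but the slack you built into $3\eta L<\varepsilon/2$ absorbs this, so the argument closes.
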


\begin{proof}
Firstly, for any $x\in L_{\varphi}(\widetilde{\mathcal{M}},\tau)$ and  $\|x\|\leq1$.

 Since $\|\mu(x)\|=\|x\|$, by the classical counterpart,
 \begin{eqnarray*}
 \tau(\varphi(x))&=&\int_{0}^{\infty}\varphi(\mu_{t}(x))dt\\
 &\leq& \|\mu_{t}(x)\|=\|x\|.
 \end{eqnarray*}
Now set
$$h=\sup\{\tau(\varphi(|2x|+|2y|)): \tau(\varphi(|x|))\leq L, \tau(\varphi(|y|))\leq 1\}.$$
Then $L<h<\infty$ since $\varphi\in\Delta_{2}.$ Without loss of generality, we can assume $L>1$ and $\varepsilon<1$.

Set $\beta=\frac{\varepsilon}{h}$, by Lemma 3.2, there exists a $\delta>0$ such that $\tau(\varphi(|y|))\leq \delta$ implies $\|y\|\leq\min\{\frac{\beta}{2},\frac{\varepsilon}{2}\}$ $i.e.$, $\big\|\frac{2}{\beta}y\big\|\leq1$. Hence, if
$\tau(\varphi(|x|))\leq L$ and $\tau(\varphi(|y|))\leq\delta$, then by (iii) of Theorem 4.4 in \cite{Fack} and convexity of $\varphi$,
\begin{eqnarray*}
\tau(\varphi(|x+y|))&=&\int^{\infty}_{0}\varphi\left(\mu_{t}(|x+y|)\right) \mathrm{d}t\\
&\leq&\int^{\infty}_{0}\varphi\left(\mu_{t}(u|x|u^{\ast}+v|y|v^{\ast})\right) \mathrm{d}t\\
&\leq&\int^{\infty}_{0}\varphi\left(\mu_{t}(u|x|u^{\ast})+\mu_{t}(v|y|v^{\ast})\right) \mathrm{d}t\\
&\leq&\int^{\infty}_{0}\varphi\left(\mu_{t}(|x|)+\mu_{t}(|y|)\right) \mathrm{d}t\\
&=&\int^{\infty}_{0}\varphi\left((1-\beta)\mu_{t}(|x|)+\beta\left(\mu_{t}(|x|)+\frac{\mu_{t}(|y|)}{\beta}\right)\right) \mathrm{d}t\\
&\leq&(1-\beta)\int^{\infty}_{0}\varphi(\mu_{t}(|x|)) \mathrm{d}t+\beta\int^{\infty}_{0}\varphi\left(\mu_{t}(|x|)+\frac{\mu_{t}(|y|)}{\beta}\right) \mathrm{d}t\\
&\leq&(1-\beta)\int^{\infty}_{0}\varphi\left(\mu_{t}\left(|x|\right)\right) \mathrm{d}t+\frac{\beta}{2}\Big[\int^{\infty}_{0}\varphi(\mu_{t}(2|x|)) \mathrm{d}t\\
&&+\int^{\infty}_{0}\varphi \left(\mu_{t} \left(\frac{2|y|}{\beta} \right) \right) \mathrm{d}t \Big]\\
&=&(1-\beta)\tau(\varphi(|x|))+\frac{\beta}{2}\left[\tau(\varphi(2|x|))+\tau\left(\varphi\left(\frac{2|y|}{\beta}\right)
\right)\right]\\
&\leq&\tau(\varphi(|x|))+\frac{\beta h}{2}+\frac{\beta}{2}\left\|\frac{2}{\beta}y\right\|\\
&\leq&\tau(\varphi(|x|))+\varepsilon.
\end{eqnarray*}
Respectively, replacing $x,y$ by $x+y,-y$ in the above inequalities, we also have
$$\tau(\varphi(|x|))=\tau(\varphi(|(x+y)+(-y)|))\leq\tau(\varphi(|x+y|))+\varepsilon.$$
This completes the proof.
\end{proof}

We say a noncommutative Orlicz space $L_{\varphi}(\widetilde{\mathcal{M}},\tau)$ is uniformly monotone, if for each $\varepsilon>0$ there exists $\delta(\varepsilon)>0$ such that for positive $\tau$-measurable operators $x,y$ with $\|x\|=1$ and $\|y\|\geq\varepsilon$, we have $\|x+y\|\geq1+\delta(\varepsilon).$
\begin{theorem}
If $\varphi\in\Delta_{2},$ then $ L_{\varphi}(\widetilde{\mathcal{M}},\tau)$ is uniformly monotone.
\end{theorem}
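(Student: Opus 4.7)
The plan is to translate the norm-level claim into a modular inequality via the $\Delta_{2}$ hypothesis together with Lemmas~3.1--3.3, and then close the gap with a noncommutative superadditivity of the modular on positive operators. Fix $\varepsilon>0$ and positive $x,y \in L_{\varphi}(\widetilde{\mathcal{M}},\tau)$ with $\|x\|=1$ and $\|y\|\geq\varepsilon$. Because $\varphi\in\Delta_{2}$ makes the modular $\widetilde{\rho}_{\varphi}$ norm-continuous, $\|x\|=1$ forces $\tau(\varphi(x))=1$, while Lemma~3.1 applied to $y$ yields $\delta_{1}=\delta_{1}(\varepsilon)\in(0,1)$ with $\tau(\varphi(y))\geq\delta_{1}$. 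If one can establish the trace superadditivity
\[
\tau(\varphi(x+y)) \;\geq\; \tau(\varphi(x))+\tau(\varphi(y)),
\]
then $\tau(\varphi(x+y))\geq 1+\delta_{1}$, and Lemma~3.3 produces $\delta=\delta(\varepsilon)>0$ with $\|x+y\|\geq 1+\delta$, which is exactly uniform monotonicity.

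The heart of the proof is therefore the superadditivity inequality. Since $\varphi$ is convex with $\varphi(0)=p(0)=0$, there exists a positive Borel measure $\nu$ on $[0,\infty)$ such that $\varphi(t)=\int_{0}^{\infty}(t-s)_{+}\,d\nu(s)$; by functional calculus and Fubini the inequality reduces to proving, for each $s\geq 0$,
\[
\tau\bigl((x+y-s\mathbf{1})_{+}\bigr)\;\geq\;\tau\bigl((x-s\mathbf{1})_{+}\bigr)+\tau\bigl((y-s\mathbf{1})_{+}\bigr).
\]
To prove this, I would use the variational identity $\tau((A-s\mathbf{1})_{+})=\sup_{p\in\mathcal{P}(\mathcal{M})}\tau(p(A-s\mathbf{1}))$, attained at $p=e_{(s,\infty)}(A)$. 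Given any projections $q_{1},q_{2}\in\mathcal{P}(\mathcal{M})$, set $p:=q_{1}\vee q_{2}$. Then $p\geq q_{1},q_{2}$ gives $\tau(px)\geq\tau(q_{1}x)$ and $\tau(py)\geq\tau(q_{2}y)$ by the positivity of $x,y$, while Kaplansky's identity $\tau(q_{1}\vee q_{2})+\tau(q_{1}\wedge q_{2})=\tau(q_{1})+\tau(q_{2})$ gives $\tau(p)\leq\tau(q_{1})+\tau(q_{2})$. Thus $\tau(p(x+y-s\mathbf{1}))\geq\tau(q_{1}(x-s\mathbf{1}))+\tau(q_{2}(y-s\mathbf{1}))$, and taking the suprema over $q_{1},q_{2}$ independently delivers the claim.

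The main obstacle is precisely this superadditivity step. Because singular value functions are only subadditive, $\mu_{s+t}(x+y)\leq\mu_{s}(x)+\mu_{t}(y)$, the Fack--Kosaki identity $\tau(\varphi(|x|))=\int_{0}^{\infty}\varphi(\mu_{t}(x))\,dt$ offers no pointwise-in-$t$ comparison, and one is forced to exploit the cyclicity of $\tau$ together with the lattice structure of $\mathcal{P}(\mathcal{M})$. A secondary technical point is that the optimising projections $e_{(s,\infty)}(x),e_{(s,\infty)}(y)$ have finite trace for $s>0$ since $\lambda_{s}(z)\leq\tau(\varphi(|z|))/\varphi(s)<\infty$ for $z\in L_{\varphi}(\widetilde{\mathcal{M}},\tau)$, so Kaplansky's identity applies directly; should the semifinite setting cause any difficulty at small $s$, one can first replace $x,y$ by their bounded truncations $\int_{0}^{n}\lambda\,de_{\lambda}(x)$ from Theorem~2.3, establish the inequality there, and then pass to the limit using the Fatou property of Theorem~2.1.
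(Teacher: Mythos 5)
Your proof is correct, and its overall skeleton is exactly the paper's: under $\Delta_{2}$ the condition $\|x\|=1$ gives $\tau(\varphi(x))=1$ (the paper cites Proposition 3.6(ii) of Sadeghi for this, where you invoke norm-continuity of the modular --- same fact), Lemma 3.1 gives $\tau(\varphi(y))\geq\delta_{1}$, superadditivity of the modular on positive operators gives $\tau(\varphi(x+y))\geq 1+\delta_{1}$, and Lemma 3.3 converts this back to $\|x+y\|\geq 1+\delta$. The one genuine difference is that the paper disposes of the superadditivity step with a single citation --- Proposition 4.6(ii) of Fack--Kosaki --- whereas you re-derive it from scratch via the representation $\varphi(t)=\int_{0}^{\infty}(t-s)_{+}\,d\nu(s)$ (valid here with $\nu=dp$, the Lebesgue--Stieltjes measure of the right-continuous nondecreasing density $p$ with $p(0)=0$), Tonelli, and the variational identity $\tau\bigl((A-s\mathbf{1})_{+}\bigr)=\sup_{p}\tau\bigl(p(A-s\mathbf{1})p\bigr)$ combined with the Kaplansky parallelogram law for $q_{1}\vee q_{2}$. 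That derivation is sound: your estimate $\tau\bigl(p(x+y-s\mathbf{1})p\bigr)\geq\tau\bigl(q_{1}(x-s\mathbf{1})q_{1}\bigr)+\tau\bigl(q_{2}(y-s\mathbf{1})q_{2}\bigr)$ with $p=q_{1}\vee q_{2}$ uses only $\tau(pzp)\geq\tau(qzq)$ for $z\geq 0$, $p\geq q$, plus $\tau(q_{1}\vee q_{2})\leq\tau(q_{1})+\tau(q_{2})$ and $s\geq 0$, all of which hold; and you correctly note the finiteness issues (for $s>0$ the optimizing spectral projections have finite trace, and convexity gives $(t-s)_{+}\leq\varphi(t)/p(s)$ so $\tau\bigl((x-s\mathbf{1})_{+}\bigr)<\infty$, making the trace arithmetic legitimate, while $s=0$ is trivial by additivity of the trace). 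You have in effect reconstructed the standard proof of the Fack--Kosaki inequality, which makes your write-up self-contained at the cost of length; the paper's version buys brevity at the cost of an external citation. Two cosmetic points only: the pairing $\tau\bigl(p(A-s\mathbf{1})\bigr)$ should be read as $\tau\bigl(p(A-s\mathbf{1})p\bigr)$ throughout (they agree by cyclicity once finiteness is secured), and before invoking Lemma 3.3 you should shrink $\delta_{1}$ so that $\delta_{1}\in(0,1)$, since that lemma is stated for $\varepsilon\in(0,1)$ --- both harmless.
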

\begin{proof}
Let $\varepsilon>0$ and $x,y\in L^{+}_{\varphi}(\widetilde{\mathcal{M}},\tau)$ such that $\|x\|=1$ and $\|y\|\geq\varepsilon$. From (ii) of Proposition 3.6 in \cite{Ghadir} we have $\tau(\varphi(x))=1$ since $\varphi\in\Delta_{2}$ and from Lemma 3.1 we have that $\tau(\varphi(y))\geq\eta$ where $\eta=\eta(\varepsilon)>0$ is as in hypothesis of Lemma 3.1.

Then by (ii) of Proposition 4.6 in \cite{Fack} we get
$$\tau(\varphi(x+y))\geq\tau(\varphi(x))+\tau(\varphi(y))\geq1+\eta.$$
Hence, using Lemma 3.3, there exists a $\delta>0$ such that $\|x+y\|\geq1+\delta.$
\end{proof}
%In \cite{Ghadir}, Theorem 3.11 shows that when $\varphi\in\Delta_{2}$, then one can find a dominant function such that the dominated convergence theorem is true. In fact, from Proposition 3.1 and Theorem 3.3 we know: if $\varphi\in\Delta_{2}$, then $L_{\varphi}(\widetilde{\mathcal{M}},\tau)$ is the set of  all bounded $\tau$-measurable operators. So the dominant function is need not to choose.
%\begin{lemma}(\cite{Fack})
%Let $x_{n}, n=1,2,\ldots,$ and $x$ be $\tau$-measure operators. Then
%$\{x_{n}\}$ converges to $x$ in the measure topology if and only if
%$$\lim_{n\rightarrow\infty}\mu_{t}(x_{n}-x)=0 \,\, for\,\,each\,\,t>0.$$
%Moreover, $\mu_{t}(x)\leq\lim_{n\rightarrow\infty}\inf\mu_{t}(x_{n}) \,\, for\,\,each\,\,t>0.$
%\end{lemma}
The following theorem shows that under the condition $\varphi\in\Delta_{2}$,  convergence in norm and in measure coincide on the unit sphere of $(L_{\varphi}(\widetilde{\mathcal{M}},\tau),\|\cdot\|).$

\begin{theorem}
Assume $x_{n},x\in L_{\varphi}(\widetilde{\mathcal{M}},\tau)$. If
$\lim_{n\rightarrow \infty}\tau(\varphi(|x_{n}|))= \tau(\varphi(|x|))$ and $x_{n}\xrightarrow{\tau_{m}}x$, then $ \lim_{n\rightarrow \infty}\tau\big(\varphi\big(\big|\frac{x_{n}-x}{2}\big|\big)\big)=0.$
Moreover, if in addition, $\varphi\in\Delta_{2}$, then $\|x_{n}-x\|\rightarrow0.$
\end{theorem}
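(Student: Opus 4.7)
My approach is to treat the two assertions in sequence, relying on a Brezis--Lieb style cancellation argument for the first and bootstrapping via $\Delta_2$ together with Lemma 3.2 for the second.

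For the first claim, the natural pointwise inequality on singular values is
\[
\varphi\!\left(\tfrac{\mu_t(x_n-x)}{2}\right)\le\tfrac12\bigl(\varphi(\mu_{t/2}(x_n))+\varphi(\mu_{t/2}(x))\bigr),
\]
obtained by combining the subadditivity $\mu_t(x_n-x)\le\mu_{t/2}(x_n)+\mu_{t/2}(x)$ with convexity of $\varphi$. Set
\[
g_n(t):=\tfrac12\bigl(\varphi(\mu_{t/2}(x_n))+\varphi(\mu_{t/2}(x))\bigr)-\varphi\!\left(\tfrac{\mu_t(x_n-x)}{2}\right)\ge 0.
\]
After the change of variable $s=t/2$, one has
\[
\int_0^\infty g_n(t)\,\mathrm{d}t=\tau(\varphi(|x_n|))+\tau(\varphi(|x|))-\tau\!\left(\varphi\!\left(\tfrac{|x_n-x|}{2}\right)\right).
\]

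Next I would establish $g_n(t)\to\varphi(\mu_{t/2}(x))$ for a.e.\ $t>0$. This rests on two facts: (i) from $x_n\xrightarrow{\tau_m}x$ and the standard estimate $\mu_{s+\delta}(x)\le\mu_s(x_n)+\mu_\delta(x_n-x)$ together with its symmetric counterpart $\mu_s(x_n)\le\mu_{s-\delta}(x)+\mu_\delta(x_n-x)$, one obtains $\mu_{s+\delta}(x)\le\liminf_n\mu_s(x_n)\le\limsup_n\mu_s(x_n)\le\mu_{s-\delta}(x)$ for every $\delta>0$, so $\mu_s(x_n)\to\mu_s(x)$ at every continuity point of the decreasing function $\mu(x)$, hence a.e.; (ii) again from $x_n\xrightarrow{\tau_m}x$, one has $\mu_t(x_n-x)\to 0$ for every fixed $t>0$, because $\mu_t(x_n-x)\le\varepsilon$ as soon as $\tau(e_{(\varepsilon,\infty)}(|x_n-x|))<t$. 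Applying the classical Fatou lemma to $g_n$ then yields
\[
2\tau(\varphi(|x|))\le\liminf_n\bigl[\tau(\varphi(|x_n|))+\tau(\varphi(|x|))-\tau(\varphi(|(x_n-x)/2|))\bigr],
\]
and the hypothesis $\tau(\varphi(|x_n|))\to\tau(\varphi(|x|))$ forces $\limsup_n\tau(\varphi(|(x_n-x)/2|))\le 0$, which is the first assertion.

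For the second claim, $\varphi\in\Delta_2$ gives $\varphi(2u)\le k\varphi(u)$, and by iteration, for each $\lambda>0$ there is a constant $C_\lambda$ with $\varphi(\lambda u)\le C_\lambda\,\varphi(u/2)$. Thus for every $\varepsilon>0$,
\[
\tau\!\left(\varphi\!\left(\tfrac{|x_n-x|}{\varepsilon}\right)\right)\le C_{1/\varepsilon}\,\tau\!\left(\varphi\!\left(\tfrac{|x_n-x|}{2}\right)\right)\longrightarrow 0
\]
by the first part. In particular, for all sufficiently large $n$ this quantity is $\le 1/2$, and Lemma 3.2 (applied to $(x_n-x)/\varepsilon$) gives $\|x_n-x\|/\varepsilon\le 1-\delta<1$, i.e.\ $\|x_n-x\|<\varepsilon$. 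Since $\varepsilon$ is arbitrary, $\|x_n-x\|\to 0$.

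I expect the main technical obstacle to be the a.e.\ convergence $\mu_s(x_n)\to\mu_s(x)$ under $\tau_m$-convergence alone; the singular-value subadditivity argument sketched above is the cleanest route I see, but some care is needed at the at most countably many discontinuity points of $\mu(x)$. The remaining ingredients (convexity, the change of variable, Fatou's lemma in the scalar setting, the $\Delta_2$-iteration, and Lemma 3.2) are routine once this pointwise reduction is in hand.
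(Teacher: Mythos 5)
Your proof is correct and follows essentially the same route as the paper: the same subadditivity-plus-convexity pointwise bound $\varphi(\mu_t((x_n-x)/2))\le\tfrac12[\varphi(\mu_{t/2}(x_n))+\varphi(\mu_{t/2}(x))]$, the same Fatou/Brezis--Lieb cancellation after the change of variables, and the same $\Delta_2$ conclusion via Lemma 3.2. In fact you supply two details the paper leaves implicit --- the a.e.\ convergence $\mu_s(x_n)\to\mu_s(x)$ at continuity points of $\mu(x)$ (the paper only cites $\mu_t(x_n-x)\to0$), and the iteration $\varphi(\lambda u)\le C_\lambda\varphi(u/2)$ behind the one-line claim that $\Delta_2$ upgrades modular convergence to norm convergence --- so your write-up is, if anything, more complete.
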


\begin{proof}
By the convexity of $\varphi$  and (ii),(v) of Lemma 2.5 in \cite{Fack}, we have
\begin{eqnarray*}
\varphi\left(\mu_{t}\left(\frac{|x-x_{n}|}{2}\right)\right)&=&\varphi\left(\frac{1}{2}\mu_{t}(|x-x_{n}|)\right)\\
&\leq&\varphi\left(\frac{1}{2}\mu_{\frac{t}{2}}\left(|x|\right)+\frac{1}{2}\mu_{\frac{t}{2}}\left(|x_{n}|\right)\right]\\
&\leq&\frac{1}{2}\left[\varphi\left(\mu_{\frac{t}{2}}\left(|x|\right)\right)+\varphi\left(\mu_{\frac{t}{2}}\left(|x_{n}|\right)\right)\right].
\end{eqnarray*}

If $x_{n}\xrightarrow{\tau_{m}}x$, it follows from Lemma 3.1 of \cite{Fack} that $\lim_{n\rightarrow\infty}\mu_{t}(x_{n}-x)=0$ for each $t>0$.
Suppose that $\tau(\varphi(|x_{n}|))\rightarrow \tau(\varphi(|x|))$,\\
Fatou's Lemma implies
\begin{eqnarray*}
\int^{\infty}_{0}\varphi\left(\mu_{\frac{t}{2}}\left(|x|\right)\right) \mathrm{d}t
&=&\int^{\infty}_{0}\lim_{n\rightarrow\infty}\Big[\frac{\varphi\left(\mu_{\frac{t}{2}}\left(|x|\right)\right)+\varphi\left(\mu_{\frac{t}{2}}\left(|x_{n}|\right)\right)}{2}\\
&&-\varphi\left(\mu_{t}\left(\frac{|x-x_{n}|}{2}\right)\right)\Big] \mathrm{d}t\\
&\leq&\lim_{n\rightarrow\infty}\inf\int^{\infty}_{0}\Big[\frac{\varphi\left(\mu_{\frac{t}{2}}\left(|x|\right)\right)+\varphi\left(\mu_{\frac{t}{2}}\left(|x_{n}|\right)\right)}{2}\\
&&-\varphi\left(\mu_{t}\left(\frac{|x-x_{n}|}{2}\right)\right)\Big] \mathrm{d}t\\
&=&\int^{\infty}_{0}\varphi\left(\mu_{\frac{t}{2}}\left(|x|\right)\right) \mathrm{d}t-\lim_{n\rightarrow\infty}\sup\tau\left(\varphi\left(\frac{|x-x_{n}|}{2}\right)\right).
\end{eqnarray*}
Then we obtain $$-\lim_{n\rightarrow\infty}\sup\tau\left(\varphi\left(\frac{|x-x_{n}|}{2}\right)\right)\geq0,$$
which implies $\tau\left(\varphi\left(\frac{|x_{n}-x|}{2}\right)\right)\rightarrow0$.
Hence $\|x_{n}-x\|\rightarrow0$ since $\varphi\in\Delta_{2}$.
\end{proof}

From Theorem 2.2 we know $E_{\varphi}(\widetilde{\mathcal{M}},\tau)$ is a closed linear subspace in norm topology and a dense subspace of $L_{\varphi}(\widetilde{\mathcal{M}},\tau)$ in measure topology. The next theorem shows that $L_{\varphi}(\widetilde{\mathcal{M}},\tau)=E_{\varphi}(\widetilde{\mathcal{M}},\tau)=\overline{\mathcal{M}}$,
when $\varphi\in\Delta_{2}$.

\begin{theorem}
If $\varphi\in\Delta_{2}$, then $$E_{\varphi}(\widetilde{\mathcal{M}},\tau)=L_{\varphi}(\widetilde{\mathcal{M}},\tau).$$
\end{theorem}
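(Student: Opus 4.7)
The inclusion $E_{\varphi}(\widetilde{\mathcal{M}},\tau)\subseteq L_{\varphi}(\widetilde{\mathcal{M}},\tau)$ is immediate from the two definitions, so the plan is to establish the reverse inclusion $L_{\varphi}(\widetilde{\mathcal{M}},\tau)\subseteq E_{\varphi}(\widetilde{\mathcal{M}},\tau)$. Fix $x\in L_{\varphi}(\widetilde{\mathcal{M}},\tau)$, so that there is some $\lambda_{0}>0$ with $\tau(\varphi(\lambda_{0}|x|))<\infty$. I must verify $\tau(\varphi(\lambda|x|))<\infty$ for every $\lambda>0$.

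First I would dispose of the easy range $0<\lambda\leq\lambda_{0}$: since $\varphi$ is nondecreasing, the Borel functional calculus applied to the positive operator $|x|$ yields the operator inequality $\varphi(\lambda|x|)\leq\varphi(\lambda_{0}|x|)$, and positivity/monotonicity of the trace gives $\tau(\varphi(\lambda|x|))\leq\tau(\varphi(\lambda_{0}|x|))<\infty$.

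For $\lambda>\lambda_{0}$, I would iterate the $\Delta_{2}$ condition. Choose $n\in\mathbb{N}$ with $2^{n}\lambda_{0}\geq\lambda$; then applying $\varphi(2u)\leq k\varphi(u)$ repeatedly yields the scalar inequality $\varphi(2^{n}t)\leq k^{n}\varphi(t)$ for all $t\geq 0$. Lifting this to the functional calculus on $|x|$ gives $\varphi(2^{n}\lambda_{0}|x|)\leq k^{n}\varphi(\lambda_{0}|x|)$ as positive operators, and taking $\tau$ yields
\[
\tau(\varphi(\lambda|x|))\leq\tau(\varphi(2^{n}\lambda_{0}|x|))\leq k^{n}\tau(\varphi(\lambda_{0}|x|))<\infty.
\]
Since $\lambda>0$ was arbitrary, $x\in E_{\varphi}(\widetilde{\mathcal{M}},\tau)$, and the two spaces coincide.

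There is no serious obstacle here; the statement is essentially a direct translation of the classical argument that $\varphi\in\Delta_{2}$ collapses the distinction between the Luxemburg class and its ``finite for all scalars'' subspace. The only point worth a moment's care is that scalar monotone inequalities for $\varphi$ do transfer to operator inequalities via the Borel functional calculus on the positive operator $|x|$ and then to trace inequalities by normality of $\tau$, but this is entirely routine. Combining with Theorem 2.2 (and the identification made in the remark preceding this theorem), this also gives $L_{\varphi}(\widetilde{\mathcal{M}},\tau)=\overline{\mathcal{M}\cap L_{\varphi}(\widetilde{\mathcal{M}},\tau)}^{\|\cdot\|}$ as announced.
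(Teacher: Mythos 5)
Your proof is correct, but it takes a genuinely different route from the paper. You work directly with the set-theoretic description $E_{\varphi}(\widetilde{\mathcal{M}},\tau)=\{x\in\widetilde{\mathcal{M}}:\tau(\varphi(\lambda|x|))<\infty \ \text{for all}\ \lambda>0\}$ (the identification made in the remark before the theorem) and prove the purely modular fact that finiteness for one scalar implies finiteness for all: monotonicity of $\varphi$ handles $\lambda\leq\lambda_{0}$, and iterating the $\Delta_{2}$ inequality to $\varphi(2^{n}t)\leq k^{n}\varphi(t)$ handles $\lambda>\lambda_{0}$. Your care about the functional-calculus step is well placed and correctly resolved: since both scalar functions are applied to the \emph{same} positive operator $|x|$ through its spectral measure, only pointwise ordering of the functions is needed, not operator monotonicity, and trace positivity then gives the trace inequality. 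The paper instead argues topologically: it invokes Theorem 3.3 (under $\Delta_{2}$, measure convergence together with modular convergence yields norm convergence) combined with Theorem 2.2, so that the truncations $x_{n}=u\int_{0}^{n}\lambda\,\mathrm{d}e_{\lambda}(|x|)\in\mathcal{M}\cap L_{\varphi}(\widetilde{\mathcal{M}},\tau)$, which converge to $x$ in measure, converge in norm, placing $x$ in $\overline{\mathcal{M}\cap L_{\varphi}(\widetilde{\mathcal{M}},\tau)}^{\|\cdot\|}=E_{\varphi}(\widetilde{\mathcal{M}},\tau)$. Your approach buys elementarity and self-containedness, and in fact supplies something the paper's one-line proof leaves implicit: Theorem 3.3 requires the hypothesis $\tau(\varphi(|x_{n}|))\rightarrow\tau(\varphi(|x|))$, which presupposes $\tau(\varphi(|x|))<\infty$ for arbitrary $x\in L_{\varphi}(\widetilde{\mathcal{M}},\tau)$ --- and that is exactly the finiteness statement your iteration establishes directly. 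What the paper's route buys is the topological picture (coincidence of norm and measure convergence on the relevant sets), which is of independent interest. One caveat to keep in mind: your iteration uses the global form $\varphi(2u)\leq k\varphi(u)$ for all $u>0$, which is how the paper defines $\Delta_{2}$; if only the $\Delta_{2}$-condition at infinity held (the natural hypothesis when $\tau$ is finite), your scalar iteration would need modification near zero, whereas the statement itself would survive in the finite-trace setting.
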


\begin{proof}
By Theorem 3.3, under the condition $\varphi\in\Delta_{2}$, the convergence in norm topology and in measure topology coincide. Hence, we can get the conclusion.
%Denote $K_{\varphi}(\widetilde{\mathcal{M}},\tau)=\{x\in\widetilde{\mathcal{M}}: \,\,\tau(\varphi(|x|))<\infty\}$. Then it obvious that $$E_{\varphi}(\widetilde{\mathcal{M}},\tau)\subset K_{\varphi}(\widetilde{\mathcal{M}},\tau)\subset L_{\varphi}(\widetilde{\mathcal{M}},\tau).$$
%From Proposition 2.6 in \cite{Kunze} we know that if $\varphi\in\Delta_{2}$, then
%$L_{\varphi}(\widetilde{\mathcal{M}},\tau)=K_{\varphi}(\widetilde{\mathcal{M}},\tau).$

%Hence, we only need to show that $K_{\varphi}(\widetilde{\mathcal{M}},\tau)\subset E_{\varphi}(\widetilde{\mathcal{M}},\tau)$. For any $x\in K_{\varphi}(\widetilde{\mathcal{M}},\tau)$, then
%$$\tau(\varphi(|x|))=\int_{0}^{\infty}\varphi(\lambda) \mathrm{d} \tau(e_{\lambda}(|x|))<\infty.$$
%Hence, for every $\varepsilon>0$ there exists $n_{0}\in\mathbb{N}$ such that  $\int_{n_{0}}^{\infty}\varphi(\lambda) \mathrm{d} \tau(e_{\lambda}(|x|))<\varepsilon$ which implies  $x\in\overline{\mathcal{M}}$.
%Using Theorem 3.1 we can get the assertion.
\end{proof}
\begin{corollary}
For any $\tau$-measurable operator $x\in{\widetilde{\mathcal{M}}}$, suppose that $\varphi(x)=|x|^{p},\,\,1\leq p<\infty$, then $\varphi\in\Delta_{2}$. Therefore,
\begin{eqnarray*}
L^{p}(\widetilde{\mathcal{M}},\tau)&=&E^{p}(\widetilde{\mathcal{M}},\tau)\\
&=&\left\{x\in\widetilde{\mathcal{M}}: \tau\left((\lambda |x|)^{p}\right)<\infty\,\, for\,\, all\,\,\lambda>0\right\}\\
&=&\left\{x\in\widetilde{\mathcal{M}}: \tau\left(|x|^{p}\right)<\infty\right\}.
\end{eqnarray*}
\end{corollary}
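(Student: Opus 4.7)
The plan is to verify the chain of equalities one link at a time, with the only nontrivial input being the $\Delta_2$ property of $\varphi(u) = u^p$, which activates Theorem 3.4.

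First I would show $\varphi \in \Delta_2$ by the direct computation
\[
\varphi(2u) = (2u)^p = 2^p u^p = 2^p\,\varphi(u),
\]
so the constant $k = 2^p$ works for every $u > 0$. This establishes the hypothesis required to invoke the main results of Section 3.

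Next, with $\varphi \in \Delta_2$ secured, Theorem 3.4 applies verbatim and gives $E_\varphi(\widetilde{\mathcal{M}},\tau) = L_\varphi(\widetilde{\mathcal{M}},\tau)$; for this specific $\varphi$ this is exactly $E^p(\widetilde{\mathcal{M}},\tau) = L^p(\widetilde{\mathcal{M}},\tau)$, settling the first equality. For the second equality, I would recall that by the remark preceding Theorem 2.3 (and combined with Theorem 2.3 itself), the space $E_\varphi(\widetilde{\mathcal{M}},\tau)$ admits the characterization
\[
E_\varphi(\widetilde{\mathcal{M}},\tau) = \bigl\{x \in \widetilde{\mathcal{M}} : \tau(\varphi(\lambda|x|)) < \infty \text{ for all } \lambda > 0\bigr\}.
\]
Specializing $\varphi(u) = u^p$ yields the third set in the statement.

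Finally, the last equality is a one-line homogeneity argument: since $\tau((\lambda|x|)^p) = \lambda^p\,\tau(|x|^p)$ for every $\lambda > 0$, the condition "$\tau((\lambda|x|)^p) < \infty$ for all $\lambda > 0$" is equivalent to the single condition "$\tau(|x|^p) < \infty$". There is no genuine obstacle here — the corollary is a direct specialization of Theorem 3.4 plus the Section 2 description of $E_\varphi$; the only point requiring any verification is the trivial $\Delta_2$ computation, and no truly hard step arises.
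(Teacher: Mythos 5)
Your proposal is correct and follows exactly the route the paper intends: the paper gives the corollary no separate proof, treating it as an immediate specialization of Theorem 3.4 together with the identification $E_{\varphi}(\widetilde{\mathcal{M}},\tau)=\{x\in\widetilde{\mathcal{M}}:\tau(\varphi(\lambda|x|))<\infty \text{ for all } \lambda>0\}$ established in Theorem 2.3 and the notational remark following it. Your explicit $\Delta_{2}$ computation with $k=2^{p}$ and the homogeneity argument $\tau((\lambda|x|)^{p})=\lambda^{p}\tau(|x|^{p})$ simply spell out the steps the paper leaves implicit.
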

\begin{remark}
Notice that the condition $\varphi\in\Delta_{2}$ in theorem 3.4 is necessary, that is to say , if $\varphi\notin\Delta_{2}$, $E_{\varphi}(\widetilde{\mathcal{M}},\tau)\subsetneqq L_{\varphi}(\widetilde{\mathcal{M}},\tau)$.
Indeed, suppose $\varphi\notin\Delta_{2}$. By $(2)$ of Theorem $1.13$ in \cite{Chen}, there exist $0<\alpha_{k}\uparrow\infty$ such that
$$\varphi\left(\left(1+\frac{1}{k}\alpha_{k}\right)\right)>2^{k}\varphi(\alpha_{k})\,\,\, (k\in\mathbb{N}).$$
Select mutually orthogonal projections $\{e_{k}\}$ in a non-atomic von Neumann algebra $\mathcal{M}$ such that
$\varphi(\alpha_{k})\tau(e_{k})=\frac{\varepsilon}{2^{k}},$
where $\varepsilon>0$ and $k\in\mathbb{N}$,
and define $$x_{n}=\sum_{k=n+1}^{\infty}\alpha_{k}e_{k}$$
with $\alpha_{k}\in\mathbb{R_{+}}$.
Then,
$$\tau(\varphi(x_{n}))=\sum_{k=n+1}^{\infty}\varphi\left(\alpha_{k}\right)\tau\left(e_{k}\right)=\frac{\varepsilon}{2^{n}}<\infty,$$  which means $x_{n}\in L_{\varphi}(\widetilde{\mathcal{M}},\tau).$

But for any $l>1,$ let $n_{0}\in\mathbb{N}$ satisfy $l\geq1+\frac{1}{n_{0}}.$ Then for any $n\geq n_{0},$
\begin{eqnarray*}
\tau(\varphi(l x_{n}))&>&\sum_{k=n+1}^{\infty}\varphi\left(\left(1+\frac{1}{k}\alpha_{k}\right)\right)\tau(e_{k})\\
&>&\sum_{k=n+1}^{\infty}2^{k}\varphi\left(\alpha_{k}\right)\tau(e_{k})\\
&=&\sum_{k=n+1}^{\infty}\varepsilon=\infty,
\end{eqnarray*}
which shows that $x_{n}\notin E_{\varphi}(\widetilde{\mathcal{M}},\tau)\,\,\,(n\in \mathbb{N}).$ Therefore if $\varphi\notin\Delta_{2}$,
$E_{\varphi}(\widetilde{\mathcal{M}},\tau)\subsetneqq L_{\varphi}(\widetilde{\mathcal{M}},\tau)$.
\end{remark}
\section*{Acknowledgement} We want to express our gratitude to the referee for all his/her careful revision and suggestions which has improved the final version of this work.

\section*{References}

\bibliography{mybibfile}

\begin{thebibliography}{00}
%\bibitem{Akemann} C. Akemann, J. Anderson, and G. Pedersen, Triangle inequalities in operator algebras, Linear and Multilinear Algebra, 11 (1982), 167-178.
\bibitem{Nelson} E. Nelson, Notes on non-commutative integration, J. Funct. Anal., 15 (1974), 103-116.
\bibitem{Stein} E. Stein, G. Weiss, Introduction to Fourier analysis on Euclidean spaces, Princeton Univ. Press, 1971.
\bibitem{Yeadon} F. J. Yeadon, Noncommutative $L^{p}$-spaces, Proc. Cambridge Philos. Soc. 77 (1975), 91-102.
\bibitem{Bennet} G. Bennet, R. Sharpley, Interpolation of operators, Academic Press, London, 1988.
%\bibitem{Kosaki} H. Kosaki, On the continuity of the map $\varphi\rightarrow|\varphi|$ from the predule of a $W^{\ast}$-algebra, Linear Multilinear Algebra 11 (1982), 167-178.
\bibitem{Segal} I. E. Segal, A non-commutative extension of abstract integration, Ann. of Math. 57 (1953), 401-457.
 \bibitem{Labuschagne} L. M. Labuschagne, W. A. Majewski, Maps on noncommutative Orlicz spaces, Illinois. J. Math, 55 (2011), 1053-1081.

\bibitem{Musielak} J. Musielak, Orlicz spaces and modular spaces, Lecture Notes in Math. 1034 (Springer Verlag)(1983).
\bibitem{Muratov} M. Muratov, Noncommutative Orlicz spaces, Dokl. Akad. Nauk UzSSR 6 (1978), 11-13.
\bibitem{Rao} M. M. Rao, Z. D. Ren, Theory of Orlicz spaces (New York, Basel, Hong Kong: Marcel Dekker Inc.) (1981).
%\bibitem{Terp} M. Terp, $L^{p}$-spaces associated with von Neumann algebras, Copenhagen Univ., 1981.
\bibitem{Rashed} M. H. A. Al-Rashed, B. Zegarlinski, Noncommutative Orlicz spaces associated to a state, Studia Math., 180 (2007), 199-209.
\bibitem{Rashed1} M. H. A. Al-Rashed, B. Zegarlinski, Noncommutative Orlicz spaces associated to a state II, Linear Algebra and its Applications., 435 (2011), 2999-3013.
    \bibitem{Dodds1} P. G. Dodds, T. K. Y. Dodds, B. de Pagter, Non-commutative Banach Function spaces, Math. Z. 201 (1989), 583-597.
\bibitem{Dodds} P. G. Dodds, T. K. Dodds, B. de Pagter, Noncommutative K\"{o}the duality, Trans. Amer. Math. Soc. 339 (1993), 717-750.
\bibitem{Xu1} Q. H. Xu, Embedding of $C_{q}$ and $R_{q}$ into noncommutative $L^{p}$-spaces, $1\leq p<q\leq2$, Math. Ann. 335 (2006) 109-131.
\bibitem{Xu2} Q. H. Xu, Bieke, T. E. D., Chen, Z. Q., Introduction to operator algebras and noncommutative $L^{p}$
spaces (in Chinese), Science Press, Beijing, (2010).
%\bibitem{Thompson} R. C. Thompson, Convex and concave functions of singular values of matrix sums, Pacific J. Math., 66 (1976), 285-290.
\bibitem{Ghadir} S. Ghadir, Non-commutative Orlicz spaces associated to a modular on $\tau$-measuable operators, J. Math. Anal. Appl. 395 (2012) 705-715.
\bibitem{Chen} S. T. Chen, Geometry of Orlicz spaces, in: Dissertations Mathematicae, Warszawa, 1996.
\bibitem{Fack} T. Fack, H. Kosaki, Generalized s-numbers of $\tau$-measuable operators, Pacific J. Math.123 (1986), 269-300.
\bibitem{Haagerup} U. Haagerup, H. P. Rosenthal, F. A. Sukochev, Banach embedding properties of non-commutative $L^{p}$-spaces, Mem. Amer. Math. Soc., 163 (776), 2003.
\bibitem{Kunze} W. Kunze, Noncommutative Orlicz spaces and generalized Arens algebras, Math. Nachr. 147 (1990), 123-138.

\end{thebibliography}

\end{document}